\allowdisplaybreaks \setlength{\textwidth}{6.5in}
\numberwithin{equation}{section}
\newtheorem{Theorem}{Theorem}[section]
\newtheorem{Lemma}[Theorem]{Lemma}
\newtheorem{prop}[Theorem]{Proposition}
\newtheorem{cor}[Theorem]{Corollary}
\theoremstyle{definition}
\newtheorem{Remark}[Theorem]{Remark}
\theoremstyle{definition}
\def\E{\mathbb{E}}
\def\N{\mathbb{N}}
\def\P{\mathbb{P}}
\def\PA{\text{PA}}
\def\bone{\boldsymbol 1}
\definecolor{darkred}{RGB}{139,0,0}
\definecolor{darkgreen}{RGB}{0,139,0}
\begin{document}

\begin{frontmatter}

\title{Growth of Common Friends in a Preferential Attachment Model}
\runtitle{Common friends}
\begin{aug}
  \author{\fnms{Bikramjit}  \snm{Das}\ead[label=e1]{bikram@sutd.edu.sg}}
 \and
  \author{\fnms{Souvik}  \snm{Ghosh}\ead[label=e2]{sghosh@linkedin.com}}

 \thankstext{T1}{The authors gratefully acknowledge support from MOE Tier 2 grant MOE2017-T2-2-161.}

  \runauthor{B. Das \and S. Ghosh}

  \affiliation{Singapore University of Technology and Design\thanksmark{m1} \and LinkedIn\thanksmark{m2}}

  \address{Singapore University of Technology and Design\\20 Dover Drive, Singapore 138682 \\
           \printead{e1}}

  \address{LinkedIn Corporation, 700 E. Middlefield Road,\\ Mountain View, CA 94043, USA\\
          \printead{e2}}

\end{aug}

\begin{abstract}
The number of common friends (or connections)  in a graph is a commonly used measure of proximity between two nodes. Such measures are used in link prediction algorithms and recommendation systems in large online social networks. We obtain the rate of growth of the number of common friends in a linear preferential attachment model. We apply our result to develop an estimate for the number of common friends. We also observe  a phase transition in the limiting behavior of the number of common friends; depending on the range of the parameters of the model, the growth is either power-law, or, logarithmic, or  static with the size of  the graph.

\end{abstract}

\begin{keyword}[class=AMS]
\kwd[Primary ]{60F15}
\kwd{60G42}
\kwd[; secondary ]{90B15}
\kwd{91D30}
\end{keyword}

\begin{keyword}
\kwd{heavy-tail}
\kwd{limit theorem}
\kwd{link prediction}
\kwd{preferential attachment}
\kwd{social network}
\end{keyword}

\end{frontmatter}

\section{Introduction}
 Networks platforms like LinkedIn, Facebook, Instagram and Twitter form a big part of our culture. These networks have facilitated an increasing number of  personal as well as professional interactions. 
The networking platforms strive to grow the network (graph) both in terms of the number of users (nodes) and the number of friendships or connections (edges) since a more densely connected user network typically results in a more engaged user base.  The platforms often use recommendation systems like \emph{People You May Know} (LinkedIn, Facebook) or \emph{Who to Follow} (Twitter) \citep{Gupta:2013}, that recommend individual users to connect with other users on the platform. Such recommendation systems look for signals that indicate that two individuals might know each other. For example, having a \emph{common friend} between two users is a signal that they know each other. Furthermore, if two users have many friends in common then there is a high chance that they know each other. 
A generalization of this problem is that of \emph{link prediction} in a network and is well-studied in the literature \citep{ASI:ASI20591}.  

%

In this paper we establish the rate of growth of common friends for a fixed pair of nodes in a linear preferential attachment model, a commonly used generative graph model.  The preferential attachment model, made popular in \cite{albert:barabasi:1999}, is a very well studied class of graph models. 
Studies have covered the behavior of degree sequence \citep{bollobas_etal:2001b,resnick:samorodnitsky:towsley:davis:willis:wan:2016,resnick:samorodnitsky:2014}, the maximal degrees in a graph, second-order degree sequences (size of network of friends of friends) \citep[Section 8]{vanderHofstad:2016}, generalizations to sublinear preferential attachment \citep{dereich:morters:2009} and limiting structure of networks \citep{elwes:2016}. Other generalizations and extensions of these scale-free models have been  studied in \cite{cooper:frieze:2003,bollobas:borgs:chayes:riordan:2003}.  See \cite{vanderHofstad:2016} for a nice overview and proper definitions of the models. To the best of our knowledge this is the first theoretical study of the number of common friends in a graph.

 Two important observations follow from our result:
\begin{itemize}
\item There is  a phase transition in the asymptotic behavior of common friends. Depending on  parameter values of the preferential attachment model, the number of common friends can exhibit a power-law or logarithmic growth or be static with the growth of the graph.

\item A corollary of our result is that we can use sampling techniques to estimate the common friends in a large network. This is helpful because computing the number of common friends for every pair of nodes in a graph is computationally expensive, especially for large networks with hundreds of millions of nodes and hundreds of billions of edges.
\end{itemize}


This paper is organized as follows. In Section \ref{sec:model} we describe the \emph{linear preferential attachment model} we work with and state the main result. In Section \ref{sec:simulation} we show some simulated results providing intuition for our results. We provide the proof of the main result and some required supplementary results  in Section \ref{sec:growth}. We conclude indicating future direction of work in Section \ref{sec:conclusion}.



\section{Growth of Common Friends: Main Result} \label{sec:model}
The model paradigm we work with is a version of the well-known  \emph{undirected linear preferential attachment graph}. The idea is that at every time instance when a new node comes to the network it creates $C$ independent edges and attaches to the previous nodes following a preferential attachment rule. The process is described as follows:

 At any time $n \ge 1$, the graph sequence is denoted $\PA_{n}^{\delta,C}$ where $C\in \N^{*}=\{1,2,\ldots\}$ and $\delta>-C$. Initially, the graph $\PA_{1}^{\delta,C}$ has one node $v_{1}$ with $C$ self-loops. Then $\PA_{n}^{\delta,C}$ evolves to $\PA_{n+1}^{\delta,C}$ thus:
at the $(n+1)^{\texttt{th}}$  stage, a new node named $v_{n+1}$ is added along with $C$ edges each of which has $v_{n+1}$ as one of its vertices,
and the other vertex is selected from $V_n:=\{v_{1},v_{2},\ldots, v_{n}\}$ with probability proportional to the degree of the vertex  (shifted by a parameter $\delta$)  in $\PA_{n}^{\delta,C}$. For $1\le i \le n$:
\begin{align}\label{def:pijn}
 p_{i,n+1}:=\P[v_{i} \leftrightarrow v_{n+1}| \PA_{n}^{\delta,C}] = &  \frac{D_{i}(n)+\delta}{\sum_{j\in V_{n}} (D_{j}(n)+\delta)} = \frac{D_{i}(n)+\delta}{(2C+\delta)n}.
                                               \end{align}
Here $D_{i} (n): =$ degree of $v_{i}$ in $\PA_{n}^{\delta,C}$. The evolution of $D_{i}(n)$ occurs as:
\begin{align}\label{def:direcurrence}
D_{i}(n+1) = D_{i}(n) + \Delta_{i}(n+1)
\end{align}
where $\Delta_{i}(n+1):= $  the number of stubs of $v_{n+1}$ (out of $C$) which attaches to $v_{i}$.
Moreover at any stage $n$, for $1\le i<j\le n$, call
\[ N_{ij}(n) := \# \;\text{friends common to $i$ and $j$.}\] 
We ignore multi-edges when counting $N_{ij} (n)$ in the graph $\PA_{n}^{\delta,C}$, that is, $v_{k}$ counts as one common friend (vertex) between $v_{i}$ and $v_{j}$ in $\PA_{n}^{\delta,C}$
for $1\le i\neq j\neq k\le n$, if $v_{i}\leftrightarrow v_{k}$ and  $v_{k}\leftrightarrow v_{j}$  regardless of their multiplicity.  Our goal is to understand the behavior of $N_{ij}(n)$ for $1\le i<j\le n$ as $n$ becomes large. Observe that in our model the growth of $N_{ij}(n)$ occurs via the recurrence relation 
\begin{equation}\label{def:nijrel}
N_{ij}(n+1) = N_{{ij}} (n) + \bone_{B_{ij}(n+1)}
\end{equation}
where $B_{ij}(n+1)$ is the event $\{v_{i}\leftrightarrow v_{n+1} \leftrightarrow v_{j}\}$. Also, note that the possible range of parameters is $C\ge 2$ and $\delta>-C$. 

The power-law growth behavior for the degree distribution of a specific node in a linear preferential attachment model is well-known \cite{bollobas_etal:2001b},\cite[Section 8]{vanderHofstad:2016}. 
\begin{prop}\label{prop:degree}
For any fixed node $v_i$, we have
\begin{align*} 
\lim_{n\to\infty}\frac{D_{i}(n)}{n^{\gamma}} \to  D_{i}(\infty) \quad {\text{a.s.}},
\end{align*}
where $\gamma =\frac{C}{2C+\delta}$, and $D_{i}(\infty)$ is a non-negative random variable with \linebreak $\E [D_{i}(\infty)]=(C+\delta) \frac{\Gamma(i)}{\Gamma(i+\gamma)}$.
\end{prop}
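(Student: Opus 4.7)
The plan is to exhibit a non-negative martingale that is essentially $D_i(n)/n^\gamma$ and invoke Doob's convergence theorem. Writing $\cF_n:=\sigma(\PA_k^{\delta,C}:k\le n)$, linearity of expectation and \eqref{def:pijn} give
\begin{equation*}
\E[\Delta_i(n+1)\mid \cF_n] = C\,p_{i,n+1} = \frac{\gamma\,(D_i(n)+\delta)}{n},
\end{equation*}
so by \eqref{def:direcurrence},
$$\E[D_i(n+1)+\delta\mid\cF_n] = (D_i(n)+\delta)\bigl(1 + \gamma/n\bigr).$$
Set
$$a_n := \prod_{k=i}^{n-1}\bigl(1+\gamma/k\bigr) = \frac{\Gamma(n+\gamma)\,\Gamma(i)}{\Gamma(n)\,\Gamma(i+\gamma)}, \qquad M_i(n):=\frac{D_i(n)+\delta}{a_n}\quad (n\ge i).$$
Then $\{M_i(n)\}_{n\ge i}$ is a non-negative $(\cF_n)$-martingale with $M_i(i)=D_i(i)+\delta=C+\delta$ (for $i\ge 2$; the case $i=1$ requires only a trivial adjustment of the initial value since $D_1(1)=2C$).

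Because $M_i$ is non-negative, $M_i(n)\to M_i(\infty)$ almost surely for some $M_i(\infty)\ge 0$. By Stirling, $\Gamma(n+\gamma)/\Gamma(n)\sim n^\gamma$, so $a_n/n^\gamma\to \Gamma(i)/\Gamma(i+\gamma)$ and consequently
$$\frac{D_i(n)}{n^\gamma}\con M_i(\infty)\cdot \frac{\Gamma(i)}{\Gamma(i+\gamma)} =: D_i(\infty)\quad\text{a.s.},$$
which establishes the almost-sure convergence claim.

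For the expectation formula, $\E[M_i(n)]\equiv C+\delta$ by the martingale property, so $\E[M_i(\infty)]\le C+\delta$ by Fatou; equality follows once almost-sure convergence is upgraded to $L^1$ convergence. Treating $\Delta_i(n+1)$ as conditionally $\text{Binomial}(C,p_{i,n+1})$ (the simultaneous attachment rule consistent with \eqref{def:pijn}),
$$\E\bigl[(D_i(n+1)+\delta)^2\bigm|\cF_n\bigr] = (D_i(n)+\delta)^2\bigl(1+\gamma/n\bigr)^2 + C\,p_{i,n+1}(1-p_{i,n+1}),$$
so dividing by $a_{n+1}^2$ produces an increment $\E[M_i(n+1)^2\mid\cF_n]-M_i(n)^2$ of order $M_i(n)/n^{1+\gamma}$. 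Taking expectations, summing, and using $\E[M_i(n)]=C+\delta$ gives $\sup_n\E[M_i(n)^2]<\infty$. Uniform integrability then forces $\E[M_i(\infty)]=C+\delta$, and dividing the asymptotic identity $a_n\sim n^\gamma\,\Gamma(i)/\Gamma(i+\gamma)$ through yields the advertised formula.

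The most delicate step is the $L^2$ bookkeeping: one must check that the variance contribution $C p_{i,n+1}(1-p_{i,n+1})$, once divided by $a_{n+1}^2\sim n^{2\gamma}$, gives rise to a recursion whose accumulated expectation stays bounded in $n$. Everything else---construction of the martingale, positivity, and the Gamma asymptotics---is essentially algebraic once \eqref{def:pijn} and \eqref{def:direcurrence} are in hand.
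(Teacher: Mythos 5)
Your proposal is correct and follows essentially the same route the paper indicates for this result: normalize $D_i(n)+\delta$ by the product $\prod(1+\gamma/k)$ to get a non-negative martingale, apply Doob's convergence theorem together with Stirling's formula, and upgrade to $L^1$ via a moment bound (the paper's Lemma \ref{lem:XisUI} proves $L^k$-boundedness for all $k$ by induction, whereas you only need and only verify the $k=2$ case, which suffices here).
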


Proposition \ref{prop:degree} can be derived using arguments from \cite[Proposition 8.2]{vanderHofstad:2016}  or  using similar arguments as in the proof of the  Proposition \ref{prop:degdeg} provided in Section \ref{sec:growth}.

Our main contribution is the following theorem which provides the growth rate of number of common friends of two nodes in such a model. The proof is given in Section \ref{sec:growth}.

\begin{Theorem}\label{thm:growingfriends}
Under the linear preferential attachment model, $(\PA_{n}^{\delta,C})_{n\ge 1}$, with $C\ge 2$, for any two fixed nodes $v_{i}, v_{j}$, we have
\begin{align*}
& \text{(1) } \lim_{n\to \infty} N_{ij} (n) = N_{ij}(\infty) \quad{\text{a.s.}} & \text{if \ } \delta >0,&\\
& \text{(2) }   \lim_{n\to \infty} \frac{N_{ij} (n)}{\log n} = \frac{C(C-1)}{(2C+\delta)^{2}}{D_{i}(\infty)D_j(\infty)}\quad {\text{a.s.}} & \text{if \ } \delta =0,&\\
& \text{(3) }   \lim_{n\to \infty}  \frac{N_{ij} (n)}{n^{2\gamma-1}/(2\gamma-1)} = \frac{C(C-1)}{(2C+\delta)^{2}}{D_{i}(\infty)D_j(\infty)} \quad {\text{a.s.}}  & \text{if \ } \delta <0.&
\end{align*}
where $\gamma =\frac{C}{2C+\delta}$, $\gamma_{1}=(1-\frac1{\sqrt{C}})\gamma$, $\gamma_{2}=(1+\frac1{\sqrt{C}})\gamma$. Furthermore, $\E [N_{ij}(\infty)]  < \infty$ and $Y_{ij}(\infty) = D_i(\infty)D_j(\infty)$ with $\E [Y_{ij}(\infty)]  =(C+\delta)^{2} \frac{ \Gamma(i)\Gamma(j)\Gamma(j+\gamma)}{\Gamma(i+\gamma)\Gamma(j+\gamma_{1})\Gamma(j+\gamma_{2})}$; $D_i(\infty)$ is the limit of the scaled degree sequence of node $v_i$ as defined in Proposition \ref{prop:degree}.
\end{Theorem}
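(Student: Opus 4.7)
The plan is to apply Doob's decomposition to \eqref{def:nijrel}. Writing
\[
N_{ij}(n) = A_n + M_n, \qquad A_n := \sum_{k=j}^{n-1} \P\bigl(B_{ij}(k+1) \mid \PA_k^{\delta,C}\bigr),
\]
with $M_n$ a zero-mean martingale with respect to the filtration generated by $\PA_\cdot^{\delta,C}$, I first compute the conditional probability by inclusion-exclusion over the $C$ stubs of $v_{n+1}$. Since each $p_{\cdot,n+1}$ is of order $n^{\gamma-1}$, a Taylor expansion gives
\[
\P\bigl(B_{ij}(n+1) \mid \PA_n^{\delta,C}\bigr) = \frac{C(C-1)(D_i(n)+\delta)(D_j(n)+\delta)}{(2C+\delta)^2\, n^2} + O(n^{3\gamma-3}).
\]
By Proposition \ref{prop:degree}, $(D_i(n)+\delta)(D_j(n)+\delta)/n^{2\gamma} \to D_i(\infty) D_j(\infty)$ almost surely, so the summand is asymptotic to $K\,Y_{ij}(\infty)\,n^{2\gamma-2}$ where $K := C(C-1)/(2C+\delta)^2$. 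The three regimes then emerge from the behavior of $\sum_{k\le n} k^{2\gamma-2}$: summable if $\delta > 0$, logarithmic if $\delta = 0$, and of order $n^{2\gamma-1}/(2\gamma-1)$ if $\delta < 0$. A Toeplitz/Cesaro-type lemma transfers the a.s.\ convergence of the prefactor through the sum, yielding the stated asymptotics for $A_n$.

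Since the increments of $M$ are $\{0,1\}$-valued, $\langle M\rangle_n \le A_n$. In regime (1), $A_\infty < \infty$ almost surely, so $M_n$ converges a.s.\ and hence so does $N_{ij}(n)$; finiteness of $\E[N_{ij}(\infty)] = \E[A_\infty]$ will follow from the $L^1$ bound on $Y_{ij}(\infty)$ derived next. In regimes (2) and (3), $A_n \to \infty$ while $\langle M\rangle_n / A_n^2 \to 0$, so the strong law of large numbers for square-integrable martingales gives $M_n/A_n \to 0$ a.s., whence $N_{ij}(n)/A_n \to 1$.

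The exact formula for $\E[Y_{ij}(\infty)]$ comes from a companion martingale. Writing $\tilde D_i(n) := D_i(n)+\delta$ and using the identity
\[
1 + \frac{2C}{(2C+\delta)n} + \frac{C(C-1)}{(2C+\delta)^2 n^2} = \Bigl(1+\tfrac{\gamma_1}{n}\Bigr)\Bigl(1+\tfrac{\gamma_2}{n}\Bigr),
\]
which holds precisely because $\gamma_1,\gamma_2$ are the roots of $x^2 - 2\gamma x + K = 0$, a direct computation of $\E[\tilde D_i(n+1)\tilde D_j(n+1) \mid \PA_n^{\delta,C}]$ shows that $\tilde D_i(n)\tilde D_j(n)/\prod_{k=j}^{n-1}(1+\gamma_1/k)(1+\gamma_2/k)$ is a true martingale for $n\ge j$. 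Evaluating at the starting time $n=j$, where $D_j(j)=C$ and the single-node martingale underlying Proposition \ref{prop:degree} gives $\E[\tilde D_i(j)] = (C+\delta)\Gamma(j+\gamma)\Gamma(i)/[\Gamma(i+\gamma)\Gamma(j)]$, the initial value is $(C+\delta)^2\Gamma(i)\Gamma(j+\gamma)/[\Gamma(i+\gamma)\Gamma(j)]$; combined with Stirling's asymptotic $\Gamma(n+\gamma_1)\Gamma(n+\gamma_2)/\Gamma(n)^2 \sim n^{2\gamma}$ this produces the claimed closed form. The main obstacle is the uniform integrability needed to upgrade the a.s.\ limit $\tilde D_i(n)\tilde D_j(n)/n^{2\gamma}\to Y_{ij}(\infty)$ to convergence of expectations; this is supplied by the $L^2$ boundedness inherent in the martingale construction, which simultaneously furnishes $\E[N_{ij}(\infty)]<\infty$ in regime (1).
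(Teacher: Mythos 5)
Your decomposition $N_{ij}(n)=A_n+M_n$ with $A_n=\sum_k\P(B_{ij}(k+1)\mid\PA_k^{\delta,C})$, followed by the martingale strong law to kill $M_n/A_n$, is essentially the paper's argument: the paper invokes the conditional Borel--Cantelli lemma, which states precisely that $\sum_k\bone_{B_{ij}(k)}/\sum_k Q_k\to1$ a.s.\ on $\{\sum_k Q_k=\infty\}$ and is itself proved by your Doob-decomposition route. Your inclusion--exclusion expansion $1-(1-p_i)^C-(1-p_j)^C+(1-p_i-p_j)^C=C(C-1)p_ip_j+O(p^3)$, the Toeplitz step transferring $Y_{ij}(k)/k^{2\gamma}\to Y_{ij}(\infty)$ through the weights $k^{2\gamma-2}$, and the companion martingale built from the identity $(1+\gamma_1/n)(1+\gamma_2/n)=1+2\gamma/n+\gamma_1\gamma_2/n^2$ are the paper's Lemmas \ref{lem:sumright}--\ref{lem:sumleft} and Proposition \ref{prop:degdeg} in only slightly different clothing (the paper replaces your Taylor error by explicit two-sided bounds $L_n\le Q_n\le R_n$; note that for $\delta<0$ with $\gamma\ge 2/3$ your error sum $\sum_k k^{3\gamma-3}$ actually diverges, so you should add the one-line check that it is still $o(n^{2\gamma-1})$).

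The one genuine gap is your justification of the identity $\E[Y_{ij}(\infty)]=C_{ij}$. You write that the uniform integrability needed to upgrade the a.s.\ limit to convergence of expectations ``is supplied by the $L^2$ boundedness inherent in the martingale construction.'' It is not: the normalized product $W_{ij}(n)$ is a nonnegative martingale with mean $1$, which gives only $L^1$ boundedness, and a nonnegative $L^1$-bounded martingale can converge a.s.\ to a limit of strictly smaller mean --- Fatou alone yields only $\E[Y_{ij}(\infty)]\le C_{ij}$. Establishing $\sup_n\E\bigl[(Y_{ij}(n)/n^{2\gamma})^2\bigr]<\infty$ requires controlling fourth moments of the degrees, and the paper devotes a separate induction on $k$ to the bound $\sup_n\E[X_i(n)^k]/n^{k\gamma}<\infty$ (Lemma \ref{lem:XisUI}), followed by Cauchy--Schwarz (Lemma \ref{lem:YijisUI}). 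You need to supply this moment bound. Once it is in place, the remainder of your argument goes through; in particular your claim $\E[N_{ij}(\infty)]<\infty$ only needs the first-moment identity $\E[Y_{ij}(k)]=O(k^{2\gamma})$ and is unaffected by this issue.
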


\begin{Remark}
An interesting observation is the different regimes in the growth rate  of number of common friends depending on the parameter $\delta$.
\begin{enumerate}
\item When $\delta>0$, we are in a regime that is mildly preferential attachment. In this regime, the nodes with low degree also get enough number of new friends. As $\delta$ increases, more nodes have a similar chance of being selected. Although the individual degrees for a fixed node grows like a power-law behavior, the number of common friends between two fixed nodes has a finite expectation even in the limit.\\
\item For $\delta\le 0 $ and especially $\delta$ closer to $-C$, the new nodes prefer to friend nodes with a high degree. In this case the number of common friends tend to grow with the number of nodes,  as a power-law for $\delta<0$ and at a logarithmic rate for $\delta=0$.
\end{enumerate}
\end{Remark}

\begin{cor} \label{cor1}
Under the preferential attachment model, $(\PA_{n}^{\delta,C})_{n\ge 1}$, with $C\ge 2$, for any two fixed nodes $v_{i}, v_{j}$ and $k>1$, we have
\begin{align*}
& \text{(1) }    \lim_{n\to \infty} \frac{N_{ij} (n)}{N_{ij}(\lfloor n/k\rfloor)} =1  \quad{\text{a.s.}} & \text{if \ } \delta >0 \text{\ and \ } \lim_{n\to \infty} N_{ij} (n) >0,\\
& \text{(2) }   \lim_{n\to \infty} \frac{N_{ij} (n)}{N_{ij}(\lfloor n/k\rfloor)} = 1 \quad {\text{a.s.}} & \text{if \ } \delta =0 \text{\ and \ }  Y_{ij}(\infty)>0,\\
& \text{(3) }   \lim_{n\to \infty}  \frac{N_{ij} (n)}{N_{ij}(\lfloor n/k\rfloor)k^{2\gamma-1}} = 1  \quad {\text{a.s.}}  & \text{if \ } \delta <0 \text{\ and \ } Y_{ij}(\infty)>0.
\end{align*}
\end{cor}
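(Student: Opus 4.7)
The claim follows essentially mechanically from Theorem \ref{thm:growingfriends} once we keep track of normalizations, so the plan is just to divide and conquer one case at a time.

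For part (1), the plan is to apply Theorem \ref{thm:growingfriends}(1) directly. Under $\delta>0$ we have $N_{ij}(n)\to N_{ij}(\infty)$ a.s., and since $\lfloor n/k\rfloor\to\infty$, the subsequence $N_{ij}(\lfloor n/k\rfloor)$ also converges a.s.\ to the same $N_{ij}(\infty)$. On the event $\{N_{ij}(\infty)>0\}$ the ratio converges a.s.\ to $N_{ij}(\infty)/N_{ij}(\infty)=1$.

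For parts (2) and (3), the idea is to factor the ratio as a ratio of normalized quantities times the ratio of normalizations. Writing $a_n = \log n$ in case (2) and $a_n=n^{2\gamma-1}/(2\gamma-1)$ in case (3), Theorem \ref{thm:growingfriends} gives $N_{ij}(n)/a_n\to L$ a.s., where $L=\frac{C(C-1)}{(2C+\delta)^2}Y_{ij}(\infty)$. Under the assumption $Y_{ij}(\infty)>0$ we have $L>0$ a.s., so we can write
\begin{equation*}
\frac{N_{ij}(n)}{N_{ij}(\lfloor n/k\rfloor)} \;=\; \frac{N_{ij}(n)/a_n}{N_{ij}(\lfloor n/k\rfloor)/a_{\lfloor n/k\rfloor}} \cdot \frac{a_n}{a_{\lfloor n/k\rfloor}},
\end{equation*}
where the first factor tends a.s.\ to $L/L=1$ (numerator and denominator of the ratio converge to the same positive limit). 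It remains to evaluate $a_n/a_{\lfloor n/k\rfloor}$. In case (2), $\log n/\log\lfloor n/k\rfloor\to 1$, giving the claim. In case (3), $\delta<0$ forces $2\gamma-1>0$, and $n^{2\gamma-1}/\lfloor n/k\rfloor^{2\gamma-1}\to k^{2\gamma-1}$, so $N_{ij}(n)/N_{ij}(\lfloor n/k\rfloor)\to k^{2\gamma-1}$, which rearranges to the stated form.

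There is no real obstacle here; the only thing to be careful about is that the denominators are almost surely nonzero for all large $n$, which is precisely why the hypotheses $N_{ij}(\infty)>0$ in case (1) and $Y_{ij}(\infty)>0$ in cases (2)--(3) are imposed, together with the fact that $N_{ij}(n)$ is nondecreasing in $n$ by the recurrence \eqref{def:nijrel}.
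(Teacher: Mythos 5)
Your proposal is correct and follows exactly the route the paper intends: the paper's own proof is just the one-line remark that the corollary is an easy consequence of the almost sure convergences in Theorem \ref{thm:growingfriends}, and your case-by-case factorization through the normalizing sequences $a_n$ (with $\log n/\log\lfloor n/k\rfloor\to 1$ and $n^{2\gamma-1}/\lfloor n/k\rfloor^{2\gamma-1}\to k^{2\gamma-1}$) is the standard way to carry that out. The positivity hypotheses are handled properly, so there is nothing to add.
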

\begin{proof} The result is an easy application of the almost sure convergences of $N_{ij}(n)$ observed  in Theorem \ref{thm:growingfriends}.
\end{proof}
The above corollary states that we can consistently estimate the number of common friends for a given pair of nodes using an earlier state of the graph, i.e., for any $k>1$
\[ \hat N^k_{ij}(n) = \left\{\begin{array}{ll} N_{ij}(\lfloor n/k\rfloor) & \mbox{if } \delta \ge 0\\ N_{ij}(\lfloor n/k\rfloor)k^{2\gamma -1} & \mbox{if } \delta <0. \end{array} \right.\]
For a large value of $k$, the graph  $\PA_{n/k}^{\delta,C}$ can be significantly smaller than $\PA_{n}^{\delta,C}$ and it is  significantly cheaper to estimate the number of common friends.

\section{Simulation Study} \label{sec:simulation}

We  illustrate the key idea behind Theorem \ref{thm:growingfriends} in the following simulated examples. Figure \ref{figure:1} shows instances of graphs simulated from the preferential attachment model with 20 nodes and $C=2$; left one with $\delta = -1.5$ and the right one with $\delta = 1.5$. When $\delta = -1.5$ we observe that the graph grows quite  preferentially. New nodes tend to connect with the same few nodes and hence the number of common friends for them keep growing fast. When $\delta = 1.5$, the graph is more distributed. New nodes tend to connect with different nodes and hence the number of common friends does not grow so much.

\begin{figure}[H]
\centering

\begin{tabular}{cc}
  \includegraphics[width=50mm]{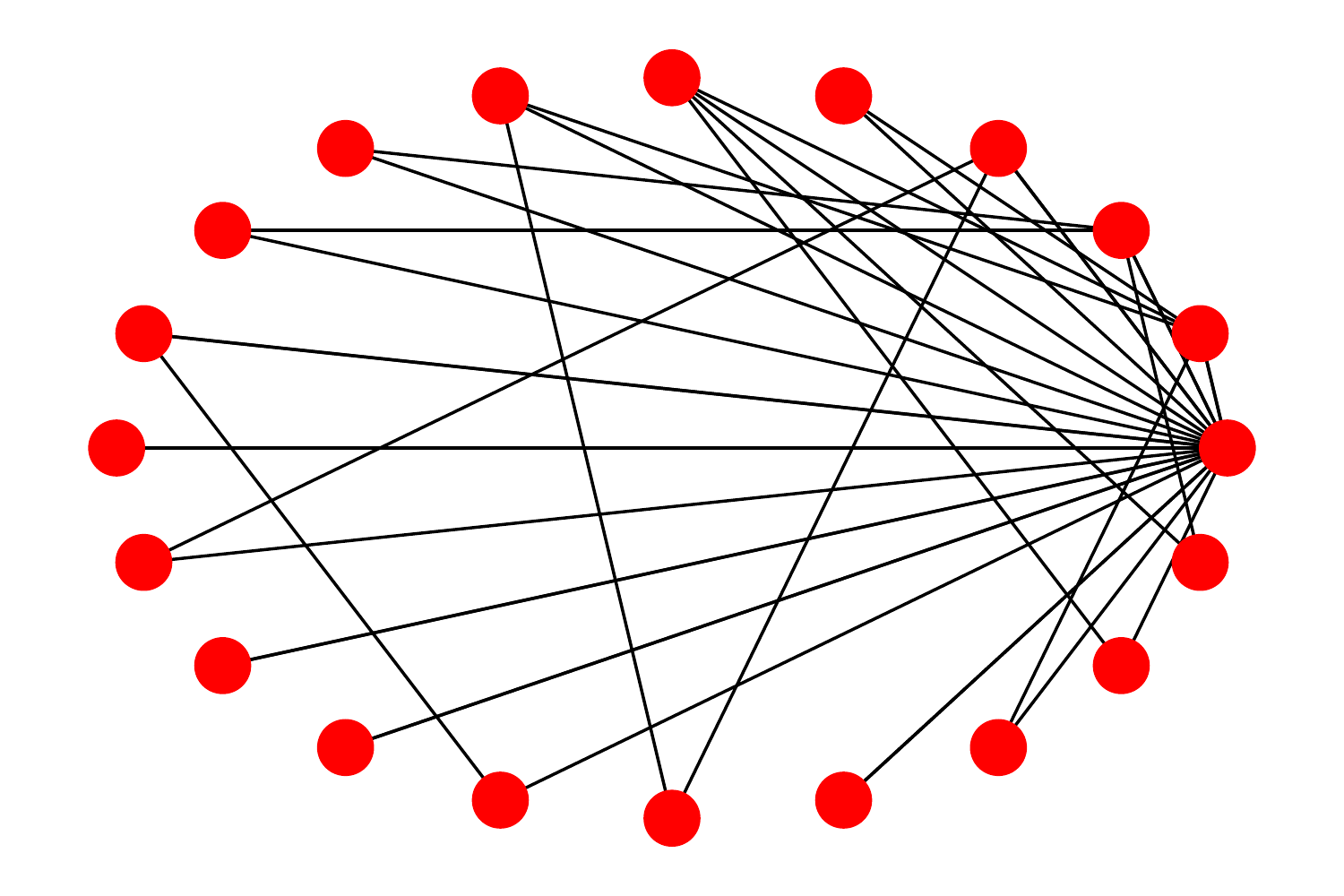} & 
  \includegraphics[width=50mm]{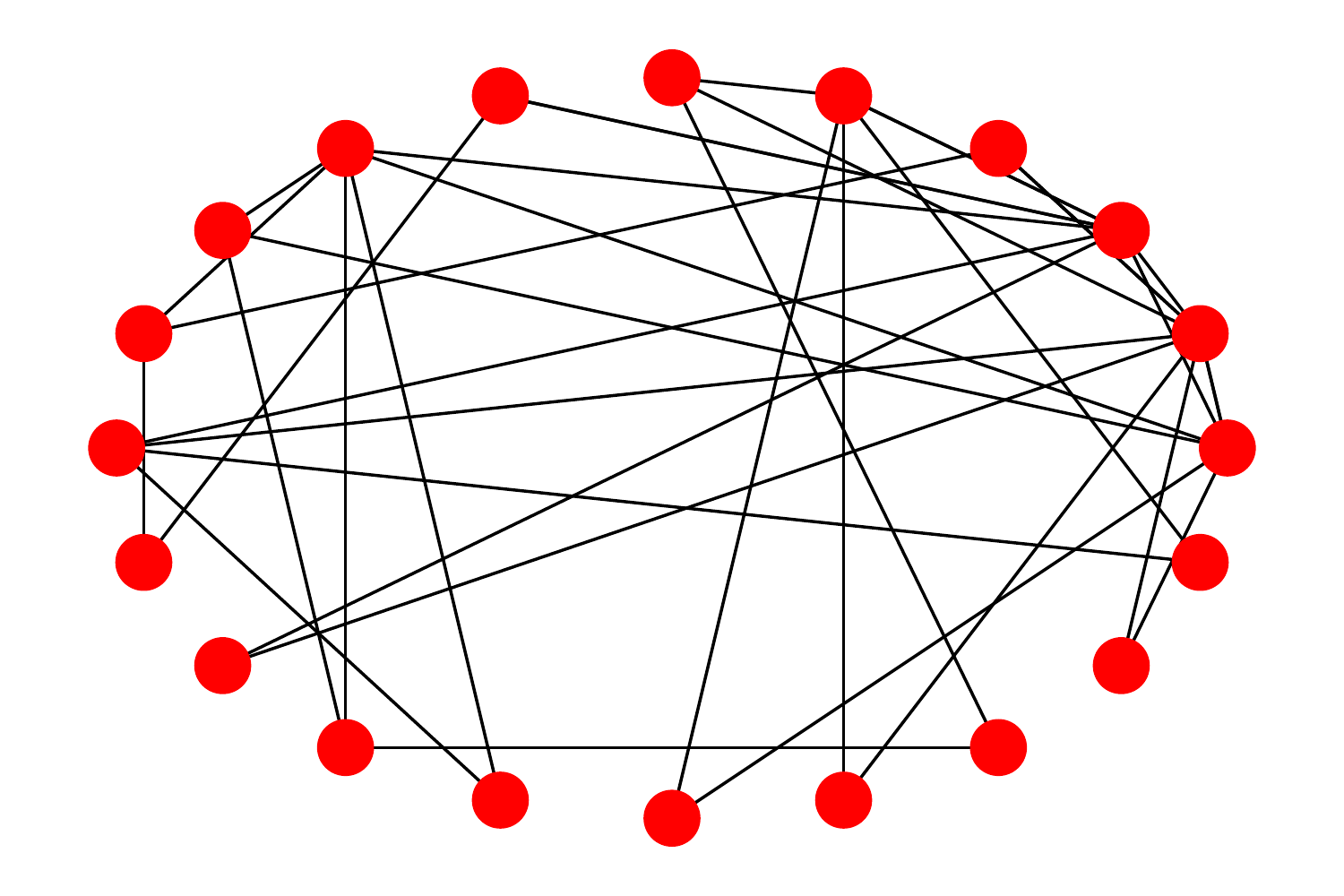} \
 
\end{tabular}

\caption{Graphs with 20 nodes simulated from $\PA_{20}^{\delta,2}$. On the left we have $\delta = -1.5$, and the right one has $\delta=1.5$.}
 \label{figure:1}
 \end{figure}
 
 \begin{figure}[H]
\centering

\begin{tabular}{cc}
  \includegraphics[width=55mm]{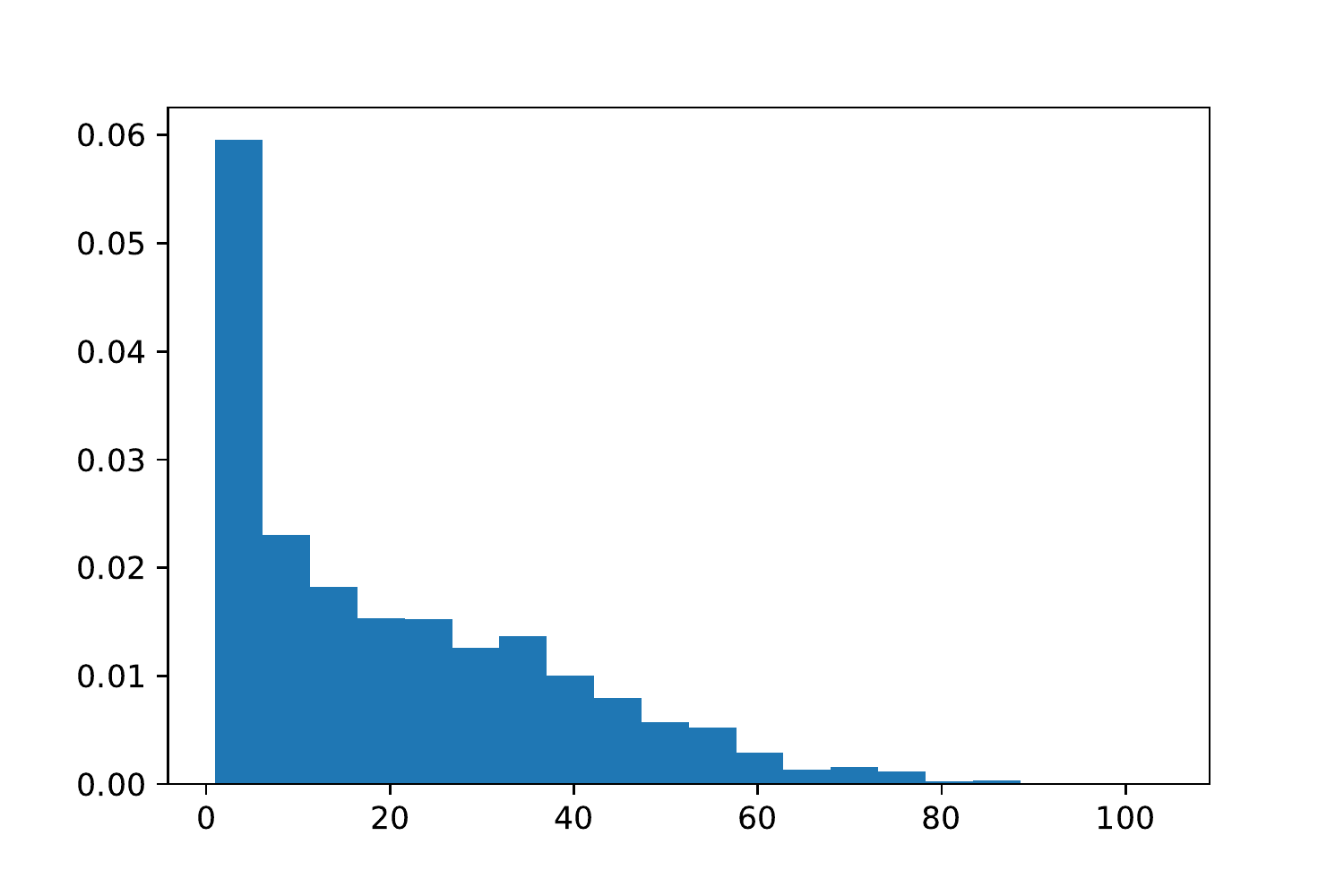} & 
  \includegraphics[width=55mm]{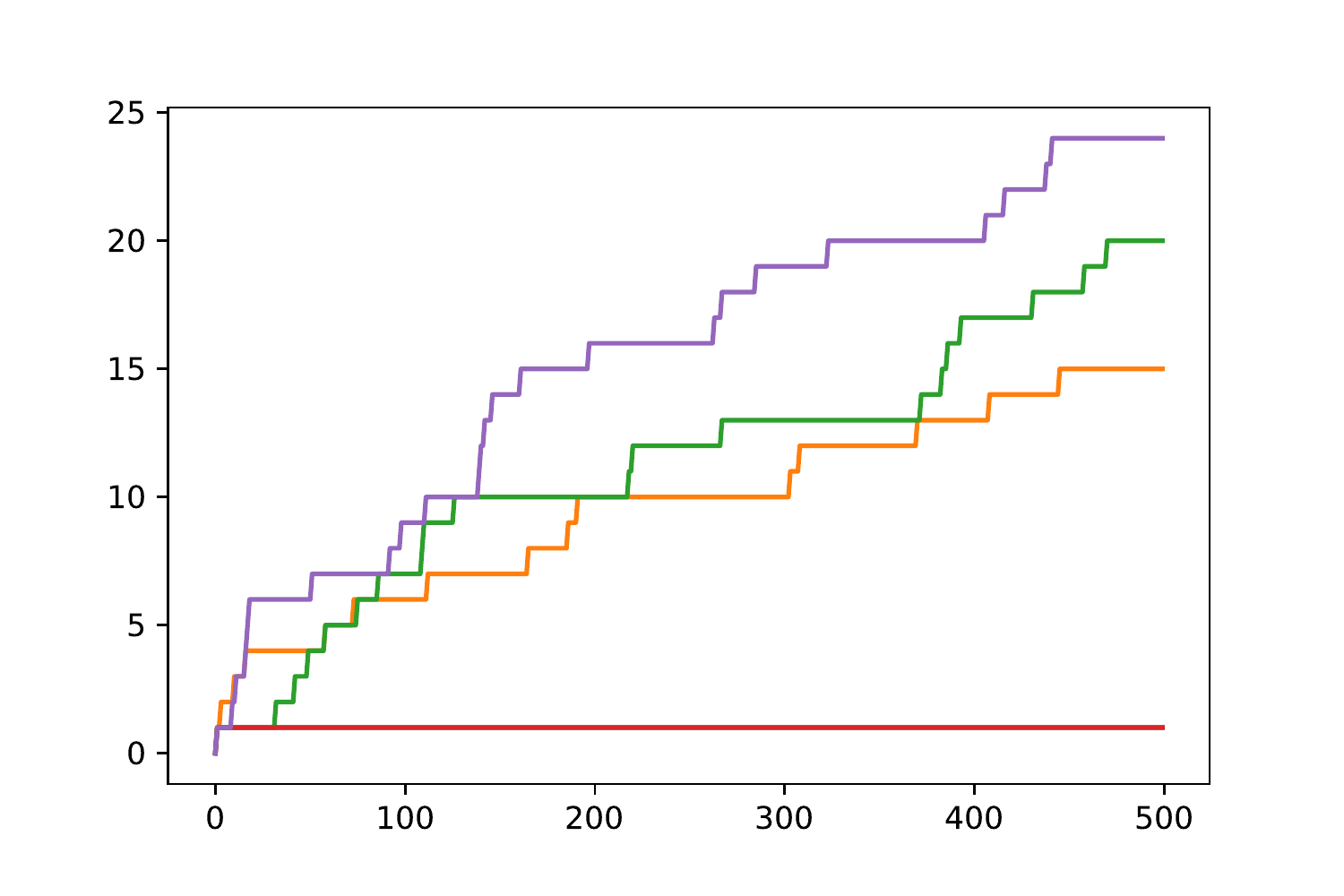} \
  
\end{tabular}

\caption{Simulation results for the number of common friends for two fixed nodes for $\PA^{-1.5,2}_{500} (\delta = -1.5, C = 2, n = 500)$. The left plot is the histogram obtained after 2500 simulations. The right plot is the growth path for 5 arbitrarily chosen simulations. }
 \label{figure:2}
 \end{figure}

\begin{figure}[H]
\centering

\begin{tabular}{cc}
  \includegraphics[width=55mm]{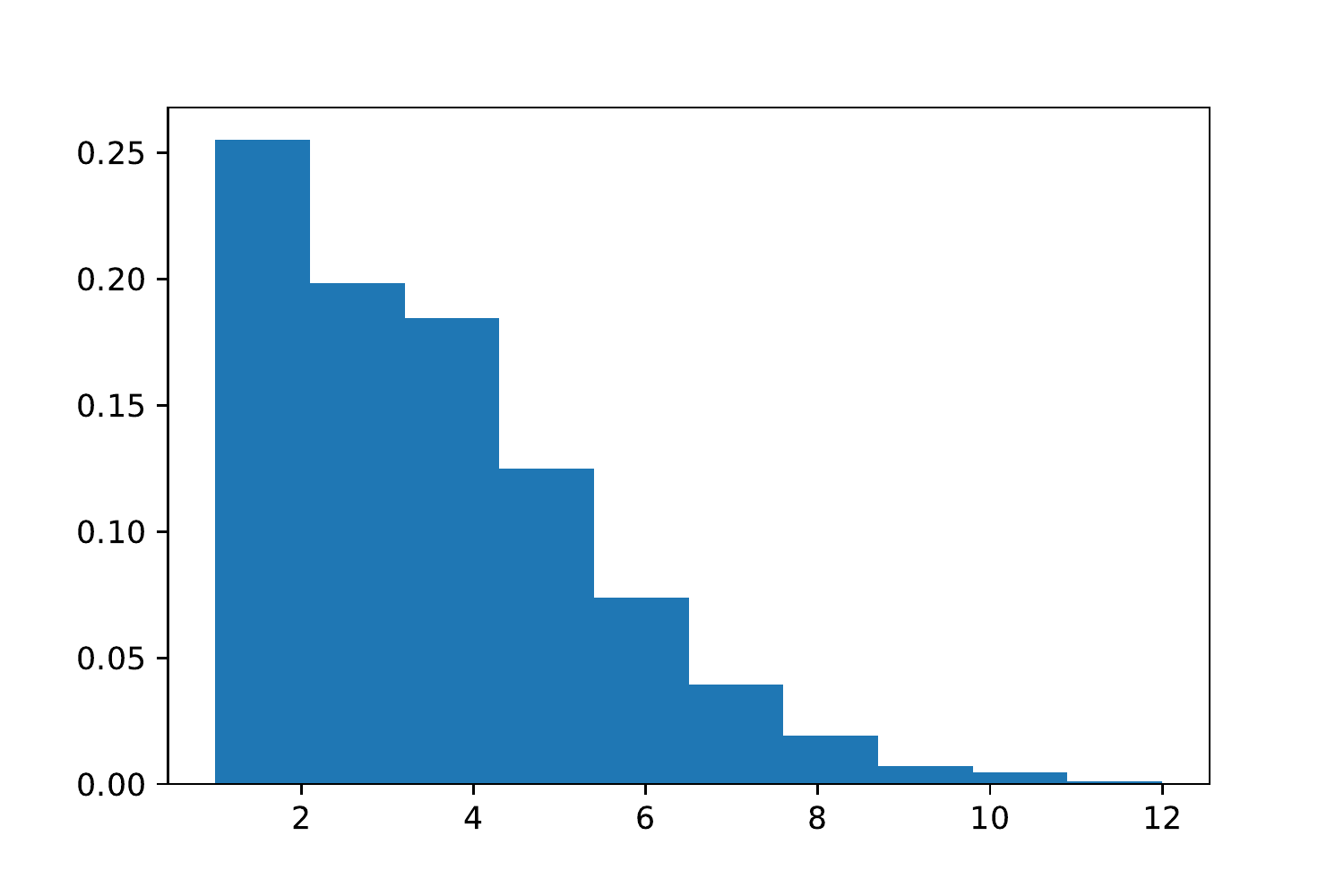} & 
  \includegraphics[width=55mm]{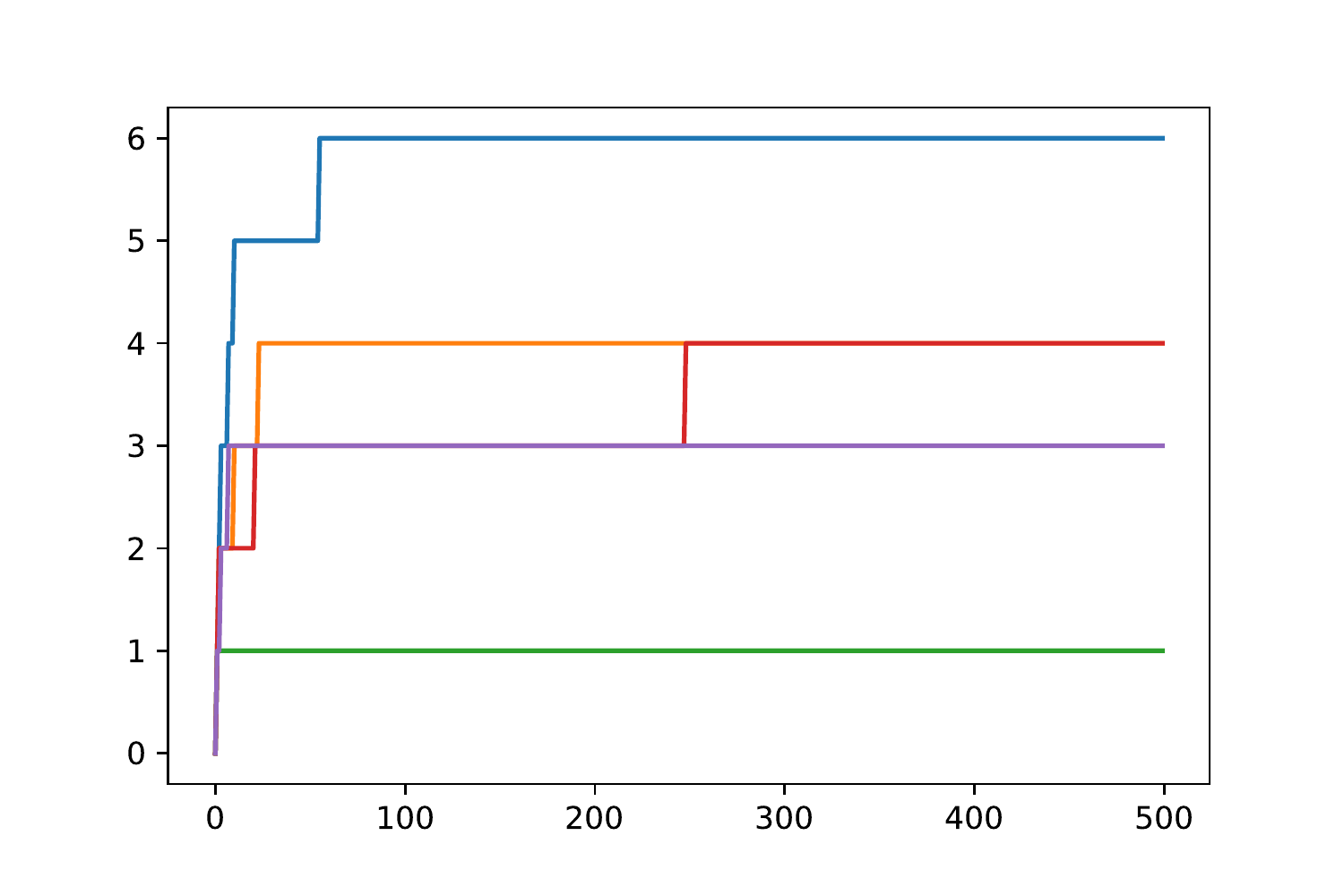} \
  
\end{tabular}

\caption{Simulation results for the number of common friends for two fixed nodes for $\PA^{1.5,2}_{500} (\delta = 1.5, C = 2, n = 500)$. The left plot is the histogram obtained after 2500 simulations. The right plot is the growth path for 5 arbitrarily chosen simulations.}
 \label{figure:3}
 \end{figure}

To understand asymptotic property of the behavior of common friends, we simulate larger graphs and replicate the exercise multiple times.  The left plot in Figure \ref{figure:2} is the histogram of number of common friends for two fixed nodes ($v_{10}$ and $v_{20}$) for $\PA^{-1.5,2}_{500}$ replicated 2500 times shows the heavy-tailed phenomenon. We also show trajectories of the number common friends for 5 arbitrary simulations as the size of the network grows in the right plot of Figure \ref{figure:2}. 

 Figure \ref{figure:3} shows the  behavior of common friends when $\delta =1.5$. As expected, we see that common friends do not grow that fast in this case.

\begin{figure}[h]
\centering

\includegraphics[width=12cm]{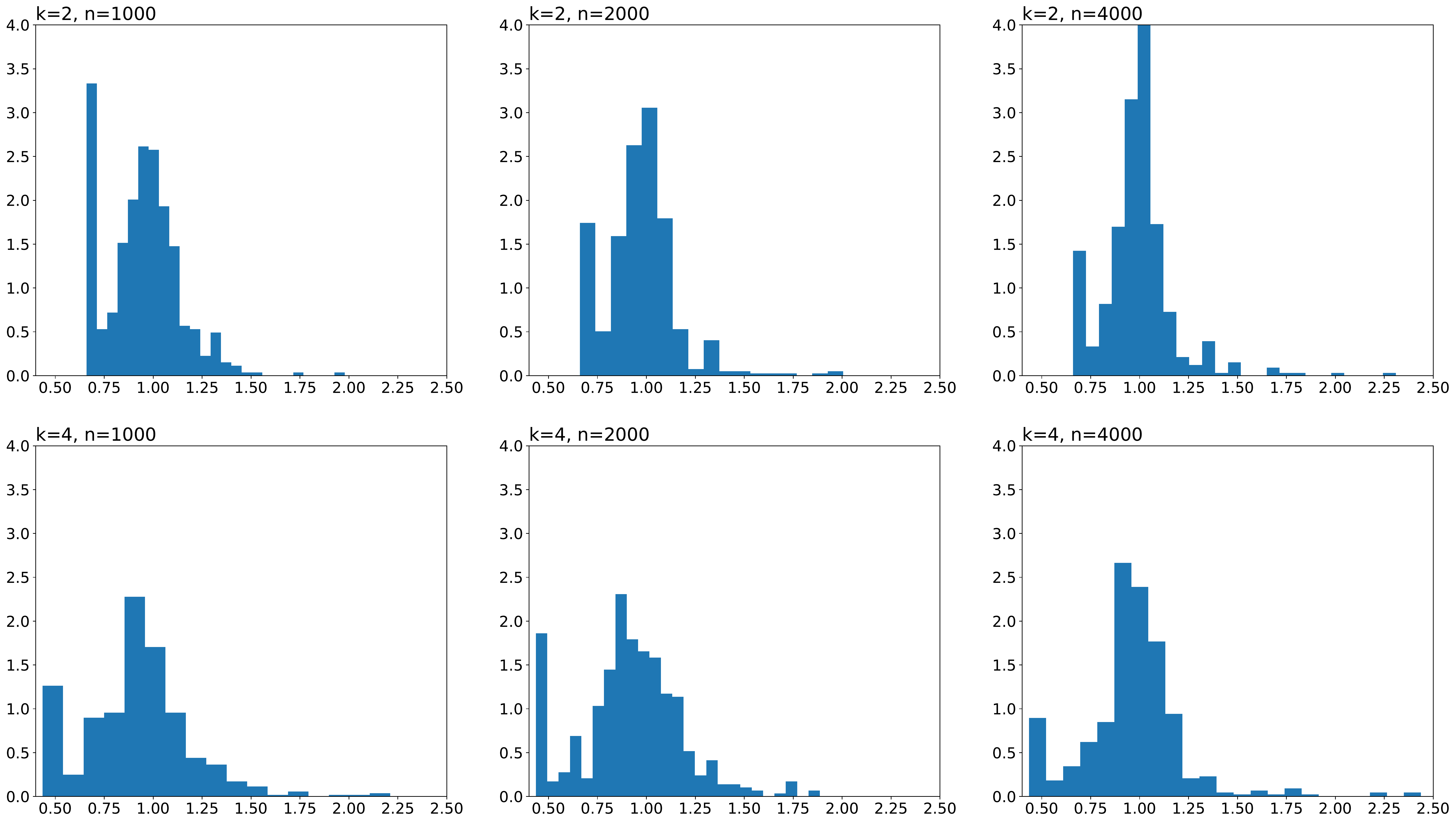} 
  
\caption{Simulation results for estimating the number of common friends for a $\PA^{1.5,2}_{n}$ model. The first row shows the histogram of $N_{ij}(n)/\hat N^k_{ij}(n)$ for $k=2$ and $n=1000, 2000, 4000$. The second row shows the same for $k=4$. }
 \label{figure:4}
 \end{figure}
 
 We also check the validity of Corollary \ref{cor1} using simulations.
Figure \ref{figure:4} provides simulation results for the estimator $\hat N^k_{ij}(n)$ for a $\PA^{1.5,2}_{n}$ model. The first row shows the histogram of 500 simulations of $N_{ij}(n)/\hat N^k_{ij}(n)$ for $k=2$ and $n=1000, 2000, 4000$. The second row shows the same for $k=4$. We see the concentration of $N_{ij}(n)/\hat N^k_{ij}(n)$ near 1 as $n$ increases.

\section{Proof of Main Result} \label{sec:growth}
In this section, we prove Theorem \ref{thm:growingfriends} by observing the asymptotic behavior of $D_{i}(n)$, $(D_{i}(n), D_{j}(n))$ jointly and functions thereof. Recall that our preferential attachment graph sequence is $(\PA_{n}^{\delta, C})_{n\ge1}$. {We assume that $C\ge 2$  and $\delta>-C$ for all the results in this section.}  For convenience's sake we use the following notations:
\begin{align*}
X_{i}(n) &:= D_{i}(n) + \delta,\\
Y_{ij}(n) &:= (D_{i}(n) + \delta)(D_{j}(n) + \delta) = X_{i}(n)X_{j}(n).
\end{align*}
From Proposition \ref{prop:degree} we get 
\begin{align} \label{eq:x:conv}
\lim_{n\to\infty}\frac{X_{i}(n)}{n^{\gamma}} \to  D_{i}(\infty) \quad {\text{a.s.}},
\end{align}
where $\gamma =\frac{C}{2C+\delta}$.

In the next few steps we apply Martingale Convergence Theorem to show almost sure convergence for the appropriately scaled sequences of random variables $\{Y_{ij}(n)\}$. We also prove uniform integrability of the sequences $\{X_i(n)\}$ and $\{Y_{ij}(n)\}$ so that we additionally have convergence in $\mathcal{L}_{1}$ and can compute the expectation of the limit.
\begin{Lemma}\label{lem:XisUI}
For any $k\ge 1$ and for a fixed $i$, we have
 \[\sup_{n\ge i} \frac{\E[X_{i}(n)^{k}]}{n^{k\gamma}} <\infty.\]
\end{Lemma}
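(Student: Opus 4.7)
The plan is to prove the bound by induction on $k$, using a recursive moment inequality for $X_i(n)$ that is driven by the preferential-attachment dynamics.

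First I would compute $\E[X_i(n+1)^k \mid \PA_n^{\delta,C}]$. Writing $\cF_n := \sigma(\PA_n^{\delta,C})$ and $\Delta_i(n+1) \in \{0,1,\ldots,C\}$ as in \eqref{def:direcurrence}, the binomial expansion gives
\begin{equation*}
\E[X_i(n+1)^k \mid \cF_n] = X_i(n)^k + k\,X_i(n)^{k-1}\,\E[\Delta_i(n+1)\mid \cF_n] + \sum_{j=2}^k \binom{k}{j}\,X_i(n)^{k-j}\,\E[\Delta_i(n+1)^j \mid \cF_n].
\end{equation*}
From \eqref{def:pijn}, the linear term is exact: $\E[\Delta_i(n+1)\mid \cF_n] = C\,p_{i,n+1} = \gamma X_i(n)/n$. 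For the higher-order corrections, using $\Delta_i(n+1)^j \le C^{j-1}\Delta_i(n+1)$ and again taking conditional expectations, each term is bounded by a multiple of $X_i(n)^{k-j}\cdot X_i(n)/n$, so collectively
\begin{equation*}
\E[X_i(n+1)^k \mid \cF_n] \;\le\; X_i(n)^k\left(1+\frac{k\gamma}{n}\right) + \frac{A_k\,X_i(n)^{k-1}}{n}
\end{equation*}
for a constant $A_k$ depending only on $k$, $C$ and $\delta$.

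The base case $k=1$ is immediate: the second-order correction disappears, iteration yields $\E[X_i(n)] = X_i(i) \prod_{m=i}^{n-1}(1+\gamma/m) = X_i(i)\,\Gamma(i)\Gamma(n+\gamma)/[\Gamma(i+\gamma)\Gamma(n)]$, which is $\Theta(n^\gamma)$ by Stirling, and $X_i(i)$ is deterministically bounded ($D_i(i) \le 2C$). For the inductive step, assume $\E[X_i(n)^{k-1}] \le K_{k-1}\,n^{(k-1)\gamma}$ for all $n$. Set $a_n := \E[X_i(n)^k]/n^{k\gamma}$. Taking expectations in the recurrence and dividing by $(n+1)^{k\gamma}$,
\begin{equation*}
a_{n+1} \;\le\; a_n\cdot\frac{n^{k\gamma}}{(n+1)^{k\gamma}}\left(1+\frac{k\gamma}{n}\right) + \frac{A_k\,\E[X_i(n)^{k-1}]}{n\,(n+1)^{k\gamma}}.
\end{equation*}
A Taylor expansion gives $\frac{n^{k\gamma}}{(n+1)^{k\gamma}}(1+k\gamma/n) = 1 + O(n^{-2})$, while the inductive hypothesis bounds the remainder by $O(n^{(k-1)\gamma}\,/\,[n \cdot n^{k\gamma}]) = O(n^{-1-\gamma})$. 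Both error sequences are summable since $\gamma>0$, so a standard Gr\"onwall/telescoping argument yields $\sup_n a_n < \infty$, which is the desired bound.

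The only non-routine point is to keep track that the constant $A_k$ in the higher-order correction is really $O(1)$ (rather than growing with $n$ or $X_i(n)$); the trick is the bound $\Delta_i(n+1)^j \le C^{j-1}\Delta_i(n+1)$, which converts each higher-moment correction into a factor of $n^{-1}$ times a lower moment of $X_i$, exactly matching what the induction on $k$ needs. Everything else is bookkeeping.
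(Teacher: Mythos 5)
Your proof is correct and follows essentially the same route as the paper's: induction on $k$, a binomial expansion of $\E[X_i(n+1)^k\mid \cF_n]$ yielding a recursion of the form $\E[X_i(n+1)^k]\le \left(1+\frac{k\gamma}{n}+O(n^{-2})\right)\E[X_i(n)^k]+O(n^{(k-1)\gamma-1})$, followed by a telescoping/Gr\"onwall step using summability of the error terms. Your bound $\Delta_i(n+1)^j\le C^{j-1}\Delta_i(n+1)$ is a tidy shortcut that lets you invoke only the $(k-1)$-st moment hypothesis rather than all lower moments as the paper does, but the argument is otherwise the same.
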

\begin{proof}
 We prove the result by induction on $k$. We have $ X_{i}(n+1) = X_{i}(n)  +\Delta_{i}(n+1)$ and $\Delta_{i}(n+1)|\PA_{n}^{\delta,C} \sim \text{Binomial} (C,p_{i,n+1})$. Hence  for $k=1$ with $1\le i\le n$ we get
\begin{align*}
\E[X_{i}(n+1)|\PA_{n}^{\delta,C}] & = X_{i}(n)  + \E[\Delta_{i}(n+1)|\PA_{n}^{\delta,C}] = X_{i}(n) + Cp_{i,n+1}\\
                                               & = (D_{i}(n)+\delta) + C\cdot\frac{D_{i}(n)+\delta}{(2C+\delta)n} = X_{i}(n) \left(1+\frac{\gamma}{n}\right).
                                               \end{align*}
We also have $ \E[X_{i}(i)] = D_{i}(i)+\delta= C +\delta.$ Now
 \begin{align*}
\E[X_{i}(n+1)] & = \E[X_{i}(n)]\left(\frac{n+\gamma}{n}\right)\\ 
& =\E[X_{i}(i)]\left(\frac{i+\gamma}{i}\right)\left(\frac{i+1+\gamma}{i+1}\right)\cdots\left(\frac{n+\gamma}{n}\right)\\
				 & = (C+\delta) \frac{\Gamma(i)}{\Gamma(i+\gamma)}\frac{\Gamma(n+1+\gamma)}{\Gamma(n+1)} =(C+\delta) \frac{\Gamma(i)}{\Gamma(i+\gamma)} (n+1)^{\gamma} \left(1+O\left(\frac1n\right)\right)
\end{align*}
using Stirling's formula \citep{abramowitz:stegun:2012} given by
\begin{align}\label{eqn:stirling} 
\frac{\Gamma(n+a)}{\Gamma(n)} = n^{a} \left(1+O\left(\frac1n\right)\right).
\end{align}
Hence for any $n\ge i$,
\[ \frac{\E[X_{i}(n)]}{n^{\gamma}} = (C+\delta) \frac{\Gamma(i)}{\Gamma(i+\gamma)}\left(1+O\left(\frac1n\right)\right) <\infty.\]
 Thus the result holds for $k=1$. By induction hypothesis, let the result be true for  $j=1,\ldots, k-1$ and  we have constants $C_{j}$ such that 
 \begin{align}\label{eq:boundCj}
\sup_{n\ge i} \frac{\E[X_{i}(n)^{j}]}{n^{j\gamma}} \le C_{j} <\infty.
\end{align}
Denoting $p=p_{i,n+1}=\frac{X_{i}(n)}{(2C+\delta)n}$ we get
\begin{align}
& \E\left[X_{i}(n+1)^{k}\right] \nonumber \\
                            & = \E\left[\E\left[ [X_{i}(n)+\Delta_{i}(n+1)]^{k}\Big|\PA_{n}^{\delta,C}\right]\right] \nonumber\\
                            & = \E\left[\E\left[X_{i}(n)^{k}+kX_{i}(n)^{k-1}\Delta_{i}(n+1) + \binom{k}{2} X_{i}(n)^{k-2}\Delta_{i}(n+1)^{2} + \cdots\Big|\PA_{n}^{\delta,C}\right]\right] \nonumber\\
                            & = \E\left[X_{i}(n)^{k}+kX_{i}(n)^{k-1}Cp + \binom{k}{2} X_{i}(n)^{k-2}(Cp(1-p)+(Cp)^{2}) + \cdots \right] \nonumber\\
                            & = \E\left[X_{i}(n)^{k}+X_{i}(n)^{k}\frac{k\gamma}{n} + X_{i}(n)^{k}\frac{k(k-1)(C-1)}{2C} \left(\frac{\gamma}{n}\right)^{2}+ X_{i}(n)^{k-1}\frac{k(k-1)}{2}\frac{\gamma}{n} +\cdots \right] \nonumber\\
                            & = \E\left[X_{i}(n)^{k}\right]\left(1+\frac{k\gamma}{n} + O\left(\frac{1}{n^{2}}\right)\right) + \sum_{r=1}^{k-1} \E[X_{i}(n)^{k-r}] \left(\alpha_{k-r} \frac{\gamma}{n} + O\left(\frac1{n^{2}}\right)\right) \nonumber\\
                            & \le  \E\left[X_{i}(n)^{k}\right]\left(1+\frac{k\gamma}{n} + O\left(\frac{1}{n^{2}}\right)\right) + \sum_{r=1}^{k-1} C_{k-r}n^{(k-r)\gamma}\left(\alpha_{k-r} \frac{\gamma}{n} + O\left(\frac1{n^{2}}\right)\right) \quad\text{(using \eqref{eq:boundCj})} \nonumber\\
                            &  \le \E\left[X_{i}(n)^{k}\right]\left(1+\frac{k\gamma}{n} + O\left(\frac{1}{n^{2}}\right)\right) + C^{*} n^{(k-1)\gamma-1} \nonumber\\
                            & =: a_{n} \E\left[X_{i}(n)^{k}\right] + b_{n} \label{eq:anbnrec}
                            \end{align}
where $\alpha_{i}'s$ and $C^{*}$ (appropriately chosen) are constants, and we denote \linebreak $a_{n}=1+\frac{k\gamma}{n}+O\left(\frac{1}{n^{2}}\right)$, $b_{n}= C^{*} n^{(k-1)\gamma-1}$.    Now using \eqref{eq:anbnrec} recursively we get
 \begin{align*}
\E\left[X_{i}(n+1)^{k} \right] & \le   \E\left[X_{i}(i)^{k}\right] \prod_{r=i}^n a_{r} + \sum_{\ell=i}^{n} b_{\ell} \left(\prod_{r=\ell+1}^n a_{\ell} \right)
 \end{align*}   
   Using Sterling's formula we have for any $\ell<n$,
   \begin{align*}
    \prod_{r=\ell+1}^n a_{r}  & =  \frac{n^{k\gamma}}{a_1\ldots a_\ell} \left(1+O\left(\frac1{n}\right)\right) \quad (n\to\infty),\\
   & = \frac{n^{k\gamma}}{\ell^{k\gamma}} \left(1+O\left(\frac1{\ell}\right)\right) \quad\quad(n>\ell\to\infty).
    \end{align*}
   Therefore,
\begin{align*}
\E\left[X_{i}(n+1)^{k} \right]    & \le \frac{(C+\delta)^{k}}{a_{1}\ldots a_{i-1}} \left[n^{k\gamma} \left(1+O\left(\frac1n\right)\right)\right] +  \sum_{\ell=i}^{n} C^{*} \ell^{(k-1)\gamma +1}\frac{n^{k\gamma}}{\ell^{k\gamma}} \left(1+O\left(\frac1\ell \right)\right)\\
& \le A_1n^{k\gamma}\left(1+O\left(\frac1n\right)\right) + C^* n^{k\gamma} \sum_{\ell=i}^{n} \frac{1}{\ell^{1+\gamma}} \left(1+O\left(\frac1\ell \right)\right)
                                 \end{align*}   
where $A_1= {(C+\delta)^{k}}/(a_{1}\ldots a_{i-1})$. Hence dividing both sides by $(n+1)^{k\gamma}$ we get
         \begin{align*}
\frac{\E\left[X_{i}(n+1)^{k}\right]}{(n+1)^{\gamma k}}  & \le A_{1}\left(1+O\left(\frac1n\right)\right) + C^* \sum_{\ell=i}^{n} \frac{1}{\ell^{1+\gamma}}\left(1+O\left(\frac1\ell \right)\right) <\infty, \quad \text{for any } n\ge i.
                                 \end{align*} 
Hence the result holds for $j=k$.
\end{proof}

\begin{Lemma}\label{lem:YijisUI}
For any $k\ge 1$, we have for a fixed $i<j$,
 \[\sup_{n\ge j} \frac{\E[Y_{ij}(n)^{2k}]}{n^{2k\gamma}} <\infty.\]
\end{Lemma}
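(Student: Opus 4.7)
The plan is to follow the induction-on-$k$ strategy used in the proof of Lemma \ref{lem:XisUI}, now applied to the product $Y_{ij}(n)=X_i(n)X_j(n)$. First, decompose
\[Y_{ij}(n+1) = Y_{ij}(n) + R_{n+1}, \qquad R_{n+1} := X_j(n)\Delta_i(n+1) + X_i(n)\Delta_j(n+1) + \Delta_i(n+1)\Delta_j(n+1),\]
and expand $Y_{ij}(n+1)^{2k} = (Y_{ij}(n)+R_{n+1})^{2k}$ by the binomial theorem. Taking conditional expectation with respect to $\PA_n^{\delta,C}$, I would use $\Delta_i(n+1)\mid\PA_n^{\delta,C}\sim\text{Bin}(C,p_{i,n+1})$, $\Delta_j(n+1)\mid\PA_n^{\delta,C}\sim\text{Bin}(C,p_{j,n+1})$, together with $\E[\Delta_i(n+1)\Delta_j(n+1)\mid\PA_n^{\delta,C}] = C(C-1)p_{i,n+1}p_{j,n+1}$. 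The last identity follows because the $C$ stubs at stage $n+1$ are assigned independently, so two \emph{distinct} stubs may attach one to $v_i$ and one to $v_j$, while a single stub cannot contribute to both counts.

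The linear-in-$R_{n+1}$ contribution gives $\E[R_{n+1}\mid\PA_n^{\delta,C}] = Y_{ij}(n)(2\gamma/n + O(1/n^2))$, which combined with the full binomial expansion produces a one-step recursion
\[\E[Y_{ij}(n+1)^{2k}] \le a_n\,\E[Y_{ij}(n)^{2k}] + b_n,\]
with $a_n = 1 + \alpha/n + O(1/n^2)$ for an explicit constant $\alpha=\alpha(k,\gamma)$ read off from the binomial expansion, and error term $b_n$ equal to a finite linear combination of mixed monomial moments $\E[X_i(n)^a X_j(n)^b]$ of strictly lower total order. I would control $b_n$ by combining the induction hypothesis applied to $\E[Y_{ij}(n)^{2j}]$ for $j<k$ with the Cauchy--Schwarz bound $\E[X_i^a X_j^b] \le \sqrt{\E[X_i^{2a}]\,\E[X_j^{2b}]} \le C\,n^{(a+b)\gamma}$, the latter being an immediate consequence of Lemma \ref{lem:XisUI}.

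Finally, I would iterate the recursion exactly as in the proof of Lemma \ref{lem:XisUI}, using Stirling's formula \eqref{eqn:stirling} to estimate the product $\prod_{r=j}^n a_r$ and the sum $\sum_{\ell=j}^n b_\ell\prod_{r>\ell}a_r$; dividing the resulting bound by $n^{2k\gamma}$ yields the claimed uniform-in-$n$ supremum. The base case $k=1$ is handled by the same recursion with $b_n$ bounded directly through Lemma \ref{lem:XisUI}, and serves as the seed for the induction. The main obstacle is the bookkeeping in the middle step: correctly reading off the leading coefficient $\alpha$ from the binomial expansion of $(Y_{ij}(n)+R_{n+1})^{2k}$ after the conditional expectations against the binomial increments are taken, and verifying that every mixed monomial appearing in the remainder $b_n$ is dominated by the target polynomial growth once Lemma \ref{lem:XisUI} is invoked at the appropriate exponents.
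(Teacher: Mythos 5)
The paper disposes of this lemma in one line: since $Y_{ij}(n)=X_i(n)X_j(n)$, Cauchy--Schwarz gives $\E[Y_{ij}(n)^{2k}]\le\bigl(\E[X_i(n)^{4k}]\bigr)^{1/2}\bigl(\E[X_j(n)^{4k}]\bigr)^{1/2}$, and Lemma \ref{lem:XisUI} applied with exponent $4k$ bounds each factor. Your induction-on-$k$ programme --- redoing the full moment recursion for the product process, with the decomposition $Y_{ij}(n+1)=Y_{ij}(n)+R_{n+1}$ and the cross-moment identity $\E[\Delta_i(n+1)\Delta_j(n+1)\mid\PA_n^{\delta,C}]=C(C-1)p_{i,n+1}p_{j,n+1}$ --- is a genuinely different route. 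It is sound in outline (that identity is correct, and is in fact the computation the paper needs later in Proposition \ref{prop:degdeg}), but for this lemma it is heavy machinery replacing a two-line reduction to a result you already have; the only thing the extra work buys is an explicit recursion you never need.

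One concrete point to fix in your version. Your own computation gives $\E[R_{n+1}\mid\PA_n^{\delta,C}]=Y_{ij}(n)(2\gamma/n+O(1/n^2))$, so the linear term of the binomial expansion forces $a_n=1+4k\gamma/n+O(1/n^2)$, and iterating yields $\E[Y_{ij}(n)^{2k}]=O(n^{4k\gamma})$. Dividing that bound by $n^{2k\gamma}$, as you propose in your last step, does \emph{not} produce a finite supremum. This is not a defect of your method relative to the paper's: Cauchy--Schwarz likewise delivers $O(n^{4k\gamma})$, and $n^{4k\gamma}=(n^{2\gamma})^{2k}$ is the correct order, since $Y_{ij}(n)/n^{2\gamma}$ converges to a nondegenerate limit. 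The exponent $2k\gamma$ in the printed statement is evidently a typo, and the normalization $n^{4k\gamma}$ is exactly what the downstream application requires, namely $\mathcal{L}^{2k}$-boundedness, hence uniform integrability, of $W_{ij}(n)\approx Y_{ij}(n)/(C_{ij}n^{2\gamma})$ in Proposition \ref{prop:degdeg}. You should state and prove the lemma with $n^{4k\gamma}$ in the denominator; with that correction either route works, but the Cauchy--Schwarz reduction is the one to prefer.
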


\begin{proof}
This follows from Lemma \ref{lem:XisUI} and the Cauchy-Schwarz inequality.
\end{proof}

\begin{Remark}\label{rem:UI}
Since both sequences $\left\{{X_{i}(n)}/{n^{\gamma}}\right\}_{n\ge i}$ and $\left\{{Y_{ij}(n)}/{n^{\gamma}}\right\}_{n\ge j}$ are  $\mathcal{L}^{k}$ bounded for some $k>1$ by Lemmas \ref{lem:XisUI} and \ref{lem:YijisUI}, they are also  uniformly integrable; see \cite[Theorem 4.6.2]{durrett:2019}. 
\end{Remark}



The next Proposition \ref{prop:degdeg} describes the asymptotic behavior of product of the degrees of two nodes, which as expected  also has a power-law growth. 
\begin{prop}\label{prop:degdeg}
 For any $i<j$ we have
\begin{align*}
\lim_{n\to\infty}\frac{Y_{ij}(n)}{n^{2\gamma}} \to Y_{ij}(\infty) =D_i(\infty)D_j(\infty)\quad {\text{a.s.}},
\end{align*}
where  $\E [Y_{ij}(\infty)] = (C+\delta)^{2} \frac{ \Gamma(i)\Gamma(j)\Gamma(j+\gamma)}{\Gamma(i+\gamma)\Gamma(j+\gamma_{1})\Gamma(j+\gamma_{2})}=:C_{ij}$ with $\gamma =\frac{C}{2C+\delta}$, $\gamma_{1}=(1-\frac1{\sqrt{C}})\gamma$, $\gamma_{2}=(1+\frac1{\sqrt{C}})\gamma$. Here $D_i(\infty), D_j(\infty)$ are as defined in Proposition \ref{prop:degree}. \end{prop}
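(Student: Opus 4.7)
The plan is to adapt the martingale approach used for the single-vertex degree to the product $Y_{ij}(n) = X_i(n)X_j(n)$. First I would compute the one-step conditional expectation
\begin{equation*}
\E[Y_{ij}(n+1)\mid \PA_n^{\delta,C}]=\E\bigl[(X_i(n)+\Delta_i(n+1))(X_j(n)+\Delta_j(n+1))\mid \PA_n^{\delta,C}\bigr].
\end{equation*}
Because the $C$ edges emanating from $v_{n+1}$ choose their endpoints independently, the joint law of $(\Delta_i(n+1),\Delta_j(n+1))$ given $\PA_n^{\delta,C}$ is multinomial, so $\E[\Delta_r(n+1)\mid\PA_n^{\delta,C}]=Cp_{r,n+1}$ for $r\in\{i,j\}$ and $\E[\Delta_i(n+1)\Delta_j(n+1)\mid\PA_n^{\delta,C}]=C(C-1)p_{i,n+1}p_{j,n+1}$. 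Substituting $p_{r,n+1}=X_r(n)/((2C+\delta)n)$ produces the clean expression
\begin{equation*}
\E[Y_{ij}(n+1)\mid\PA_n^{\delta,C}]=Y_{ij}(n)\left(1+\frac{2\gamma}{n}+\frac{(C-1)\gamma^2}{Cn^2}\right)=Y_{ij}(n)\,\frac{(n+\gamma_1)(n+\gamma_2)}{n^2},
\end{equation*}
where I would check that $\gamma_1+\gamma_2=2\gamma$ and $\gamma_1\gamma_2=(C-1)\gamma^2/C$ indeed agree with $\gamma_{1,2}=(1\mp1/\sqrt C)\gamma$. Spotting this quadratic factorization is the only slightly delicate algebraic moment.

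Next I would define $c_n:=\prod_{m=j}^{n-1}(m+\gamma_1)(m+\gamma_2)/m^2$ for $n\ge j$ (with $c_j:=1$) and set $M_n:=Y_{ij}(n)/c_n$. By construction $(M_n)_{n\ge j}$ is a non-negative martingale with respect to the filtration generated by $(\PA_n^{\delta,C})_{n\ge j}$, so it converges almost surely to a finite limit $M_\infty$. Stirling's formula \eqref{eqn:stirling} gives
\begin{equation*}
c_n=\frac{\Gamma(j)^2}{\Gamma(j+\gamma_1)\Gamma(j+\gamma_2)}\cdot\frac{\Gamma(n+\gamma_1)\Gamma(n+\gamma_2)}{\Gamma(n)^2}\sim \frac{\Gamma(j)^2}{\Gamma(j+\gamma_1)\Gamma(j+\gamma_2)}\,n^{2\gamma},
\end{equation*}
so the a.s. convergence of $M_n$ transfers to the a.s. convergence of $Y_{ij}(n)/n^{2\gamma}$ to a finite limit.

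For the value of the expectation I would use the uniform integrability of $\{Y_{ij}(n)/n^{2\gamma}\}$ from Lemma~\ref{lem:YijisUI} and Remark~\ref{rem:UI}, which implies uniform integrability of $\{M_n\}$, hence $\E[M_\infty]=\E[M_j]=\E[Y_{ij}(j)]$. Since $v_j$ is added at time $j$ with $X_j(j)=C+\delta$ deterministic, and the expectation $\E[X_i(j)]=(C+\delta)\Gamma(i)\Gamma(j+\gamma)/(\Gamma(i+\gamma)\Gamma(j))$ was already computed in the proof of Lemma~\ref{lem:XisUI}, this gives
\begin{equation*}
\E[Y_{ij}(j)]=(C+\delta)^2\,\frac{\Gamma(i)\Gamma(j+\gamma)}{\Gamma(i+\gamma)\Gamma(j)}.
\end{equation*}
Multiplying by the asymptotic factor $\Gamma(j)^2/(\Gamma(j+\gamma_1)\Gamma(j+\gamma_2))$ coming from $c_n/n^{2\gamma}$ yields the claimed constant $C_{ij}$.

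Finally, to identify the limit with the product $D_i(\infty)D_j(\infty)$, I would simply observe that \eqref{eq:x:conv} is an almost-sure statement on the same underlying probability space for both coordinates, so
\begin{equation*}
\frac{Y_{ij}(n)}{n^{2\gamma}}=\frac{X_i(n)}{n^\gamma}\cdot\frac{X_j(n)}{n^\gamma}\xrightarrow{\text{a.s.}} D_i(\infty)D_j(\infty),
\end{equation*}
which forces the a.s. limit of $Y_{ij}(n)/n^{2\gamma}$ obtained from the martingale argument to coincide with $D_i(\infty)D_j(\infty)$. The main obstacle is conceptual rather than technical: recognizing the right normalization, i.e.\ that the multiplicative drift for $Y_{ij}$ factors through the two exponents $\gamma_1,\gamma_2$. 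Once this is in hand, the remainder is bookkeeping with Stirling and appealing to the already-established $\mathcal{L}^2$ bounds for uniform integrability.
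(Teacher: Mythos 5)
Your proposal is correct and follows essentially the same route as the paper: compute the one-step conditional expectation to find the multiplicative drift $(n+\gamma_1)(n+\gamma_2)/n^2$, normalize to obtain a non-negative martingale, invoke the $\mathcal{L}^k$ bounds of Lemma~\ref{lem:YijisUI} for uniform integrability so that the mean passes to the limit, and identify the limit with $D_i(\infty)D_j(\infty)$ via Proposition~\ref{prop:degree}. Your normalization by $c_n$ differs from the paper's $W_{ij}(n)$ only by the constant $C_{ij}$, and your multinomial computation of $\E[\Delta_i\Delta_j\mid\PA_n^{\delta,C}]=C(C-1)p_{i,n+1}p_{j,n+1}$ is a cleaner rendering of the same calculation.
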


\begin{proof}

Note that $$p_{i,n+1}p_{j,n+1} = \frac{Y_{ij}(n)}{(2C+\delta)^{2}n^{2}}$$
and writing $p_i=p_{i,n+1}, p_j=p_{j,n+1} $ we have
\begin{align*}
&\E [Y_{ij}(n+1)| \PA_{n}^{\delta,C}] \\
& = Y_{ij} (n) + \E [Y_{ij}(n+1)-Y_{ij}(n)| \PA_{n}^{\delta,C}]\\    
& = Y_{ij} (n) + \sum_{\substack{0\le k, l \le C\\ k + l =C}} \Big[k(D_{j}(n)+\delta)    + l(D_{i}(n)+\delta) + k l\Big] \frac{C!\; p_{i}^{k}p_{j}^{l}(1-p_{i}-p_{j})^{C-k-l}}{k!l!(C-k-l!)}\\
& = Y_{ij}(n) \frac{(n+\gamma_{1})(n+\gamma_{2})}{n^{2}},
\end{align*}
where $\gamma =\frac{C}{2C+\delta}, \gamma_{1}=(1-\frac1{\sqrt{C}})\gamma, \gamma_{2}=(1+\frac1{\sqrt{C}})\gamma$. Moreover, for $i<j$ we have,
\begin{align*}
\E Y_{ij} (j) = \E [(D_{i}(j)+\delta)(D_{j}(j)+\delta)] = (C+\delta)^{2} \frac{\Gamma(i)\Gamma(j+\gamma)}{\Gamma(j)\Gamma(i+\gamma)}.
\end{align*}
Therefore, 
\begin{align}
\E Y_{ij} (n+1) & = \E Y_{ij} (n) \frac{(n+\gamma_{1})(n+\gamma_{2})}{n^{2}} \nonumber \\
                       & = \E Y_{ij} (j)  \frac{(j+\gamma_{1})(j+\gamma_{2})}{j^{2}} \ldots \frac{(n+\gamma_{1})(n+\gamma_{2})}{n^{2}} \nonumber\\
                       & = (C+\delta)^{2} \frac{\Gamma(i)\Gamma(j+\gamma)}{\Gamma(j)\Gamma(i+\gamma)} \frac{(\Gamma(j))^{2}} {(\Gamma(n+1))^{2}} \frac{\Gamma(n+1+\gamma_{1})\Gamma(n+1+\gamma_{2})}{\Gamma(j+\gamma_{1})\Gamma(j+\gamma_{2})} \nonumber \\
                       & =: C_{ij} \frac{\Gamma(n+1+\gamma_{1})\Gamma(n+1+\gamma_{2})}{(\Gamma(n+1))^{2}}. \label{eq:yijnp1}
\end{align}
Define $$W_{ij}(n):=  \frac{Y_{ij}(n)}{C_{ij}}  \frac{(\Gamma(n))^{2}}{\Gamma(n+\gamma_{1})\Gamma(n+\gamma_{2})} = \frac{Y_{ij}(n)}{C_{ij}n^{2\gamma}}\left(1+O\left(\frac 1n\right)\right),$$
using Sterling's formula. Hence by Lemma \ref{lem:YijisUI}, $\{W_{ij}(n)\}_{j\ge n}$ is uniformly integrable. 
Moreover $W_{ij}(n)\ge 0, \E W_{ij}(n) =1,$ and $\E[W_{ij}(n+1)|W_{ij}(n)] = W_{ij}(n)$ for $1\le i\neq j \le n$. Hence by  Doob's Martinagale Convergence Theorem  \citep[Theorem 4.2.11 and Theorem 4.6.4]{durrett:2019} \[ \lim_{n\to \infty} W_{ij}(n) \to W_{ij}(\infty), \quad \text{a.s. and in $\mathcal{L}^{1}$},\]
where $W_{ij}(\infty):=\limsup_{n} W_{ij}(n)$ and $\E W_{ij}(\infty) <\infty$. Hence we have \begin{align*}
\lim_{n\to\infty}\frac{Y_{ij}(n)}{n^{2\gamma}} = C_{ij}\lim_{n\to\infty}{W_{ij}(n)}\left(1+O\left(\frac 1n\right)\right) \to C_{ij}W_{ij}(\infty)=: Y_{ij}(\infty)
\end{align*}
both almost surely and in $\mathcal{L}^{1}$ with $\E(Y_{ij}(\infty))=C_{ij}$. From Proposition \ref{prop:degree} and \eqref{eq:x:conv} we can check that $n\to \infty$, 
${Y_{ij}(n)}/{n^{2\gamma}} \to D_{i}(\infty)D_{j}(\infty)$ a.s.; hence $ Y_{ij}(\infty)=D_{i}(\infty)D_{j}(\infty)$ a.s.
\end{proof}

\begin{Lemma}\label{lem:sumright}
For any $i<j$, we have
\begin{align*}
(1)& \lim_{n\to \infty} \frac{1}{\log(n)} \sum_{k=j}^{n} \frac{Y_{ij}(k)}{k^{2}} = Y_{ij} (\infty) \quad \quad \text{a.s} & \text{if \ } \delta =0,&\\
(2) &\lim_{n\to \infty} \frac{1}{n^{2\gamma-1}/(2\gamma-1)} \sum_{k=j}^{n} \frac{Y_{ij}(k)}{k^{2}} = Y_{ij} (\infty) \quad \quad \text{a.s} & \text{if \ } \delta <0,&\\
\end{align*}
where $Y_{ij}(\infty)$ is as defined in Proposition \ref{prop:degdeg}.
\end{Lemma}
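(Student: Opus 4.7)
The plan is to reduce both statements to a single weighted Ces\`aro/Toeplitz averaging argument, using Proposition \ref{prop:degdeg} as the only probabilistic input. Write
\[
\frac{Y_{ij}(k)}{k^{2}} \;=\; \frac{Y_{ij}(k)}{k^{2\gamma}}\,\cdot\, k^{2\gamma-2},
\]
and set $a_{k}:=Y_{ij}(k)/k^{2\gamma}$, which by Proposition \ref{prop:degdeg} converges almost surely to $a_{\infty}:=Y_{ij}(\infty)$. Then
\[
\sum_{k=j}^{n}\frac{Y_{ij}(k)}{k^{2}} \;=\; \sum_{k=j}^{n} a_{k}\, b_{k}, \qquad b_{k}:=k^{2\gamma-2},
\]
so the lemma is an instance of the standard fact that if $a_{k}\to a_{\infty}$ and $b_{k}\ge 0$ with $B_{n}:=\sum_{k=j}^{n}b_{k}\to\infty$, then $B_{n}^{-1}\sum_{k=j}^{n}a_{k}b_{k}\to a_{\infty}$.

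First I would handle the asymptotics of $B_{n}$ in the two regimes. For $\delta=0$ we have $\gamma=\tfrac12$, so $b_{k}=1/k$ and $B_{n}\sim \log n$; this matches the normalizing sequence in part~(1). For $\delta<0$ we have $\gamma>\tfrac12$, so $2\gamma-1>0$, and a straightforward comparison with $\int_{j}^{n}x^{2\gamma-2}\,dx$ gives $B_{n}\sim n^{2\gamma-1}/(2\gamma-1)$, which matches part~(2). In both cases $B_{n}\to\infty$.

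Next I would verify the weighted averaging principle on the event $\{a_{k}\to a_{\infty}\}$ (which has probability one). Given $\varepsilon>0$, choose $K$ so large that $|a_{k}-a_{\infty}|<\varepsilon$ for $k\ge K$, and split
\[
\Bigl|\sum_{k=j}^{n}a_{k}b_{k} - a_{\infty} B_{n}\Bigr|
\;\le\; \sum_{k=j}^{K-1}|a_{k}-a_{\infty}|\,b_{k} \;+\; \varepsilon\,(B_{n}-B_{K-1}).
\]
The first sum is a fixed finite quantity, and since $B_{n}\to\infty$ dividing by $B_{n}$ and letting $n\to\infty$ then $\varepsilon\downarrow 0$ yields $B_{n}^{-1}\sum_{k=j}^{n}a_{k}b_{k}\to a_{\infty}$ a.s. Substituting the asymptotics for $B_{n}$ in each case gives the stated limits.

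I do not anticipate a serious obstacle: the only non-trivial ingredient is the almost sure convergence $a_{k}\to Y_{ij}(\infty)$, which is exactly Proposition \ref{prop:degdeg}; the remainder is a deterministic averaging argument, carried out pathwise on the full-measure event where Proposition \ref{prop:degdeg} holds. The mildest care needed is to ensure one does not confuse $B_{n}$ with $\sum b_{k}/b_{n}$ type asymptotics when replacing sums by integrals in the $\delta<0$ case, but the elementary estimate $(n^{2\gamma-1}-j^{2\gamma-1})/(2\gamma-1) \le B_{n}\le $ similar upper bound suffices.
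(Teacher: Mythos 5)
Your proposal is correct and follows essentially the same route as the paper: the paper also fixes a point in the full-measure event where Proposition \ref{prop:degdeg} holds, splits the sum at a threshold $n_{0}$ beyond which $|Y_{ij}(k)/k^{2\gamma}-Y_{ij}(\infty)|<\epsilon$, and controls the head, the $\epsilon$-weighted tail, and the discrepancy between $\sum_{k}k^{2\gamma-2}$ and its normalizer separately. Your packaging of this as a general weighted Ces\`aro/Toeplitz principle with weights $b_{k}=k^{2\gamma-2}$ is just a cleaner statement of the identical argument.
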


\begin{proof}
Let all our random variables be defined on the probability space $(\Omega,\mathcal{A},\P)$. From Proposition \ref{prop:degdeg}, for $\omega \in \Omega$,
\begin{align}\label{eq:yijn}
 \lim_{n\to\infty} \frac{Y_{ij}(n,\omega)}{n^{2\gamma}} = Y_{ij}(\infty, \omega)
 \end{align}
 holds with probability 1. Fix such an $\omega\in \Omega$.
Then given any small $\epsilon>0$, there exists $n_{0}\in \N^{*}$ such that for any $n\ge n_{0}$,
\begin{align}\label{eq:eps}
\Bigg|\frac{Y_{ij}(n,\omega)}{n^{2\gamma}} - Y_{ij}(\infty,\omega)\Bigg| <\epsilon.
\end{align}
(1) If $\delta=0$, we have $\gamma=\frac12$.  Also for $n\to \infty$, we have $\displaystyle{\sum_{k=1}^{n} \frac1{k^{2-2\gamma}} =\sum_{k=1}^{n} \frac1k \sim \log(n)}$.  Hence
\begin{align*}
 \Bigg|\ \frac{1}{\log(n)} & \sum_{k=j}^{n} \frac{Y_{ij}(k,\omega)}{k^{2}}- Y_{ij}(\infty,\omega)\Bigg| \\ & \le \Bigg| \frac{1}{\log(n)} \sum_{k=j}^{n_{0}} \frac{Y_{ij}(k)}{k^{2}}\Bigg| \;+ \;\Bigg|\ \frac{1}{\log(n)} \sum_{k=n_{0}+1}^{n} \left[\frac{Y_{ij}(k,\omega)}{k^{2}}- \frac{Y_{ij}(\infty,\omega)}{k}\right]\Bigg| \; \\ &\quad\quad\quad\quad\quad\quad\quad\quad\quad\quad\quad\quad+ \;\frac{Y_{ij}(\infty,\omega)}{\log(n)}\left[\sum_{k=n_{0}+1}^{n}\frac1k-\log(n)\right]\\
& =: {\rm{I}}_{n}+{\rm{II}}_{n}+{\rm{III}}_{n}.
\end{align*}
Check that as $n\to\infty$, ${\rm{I}}_{n}\to0, {\rm{III}}_{n}\to0$. Since $\gamma=\frac 12$ using \eqref{eq:eps} we get
\begin{align*}
 {\rm{II}}_{n} = \Bigg|\ \frac{1}{\log(n)} \sum_{k=n_{0}+1}^{n} \frac1k\left[\frac{Y_{ij}(k,\omega)}{k^{2\gamma}}- {Y_{ij}(\infty,\omega)}\right]\Bigg|  \le \epsilon \frac{1}{\log(n)} \sum_{k=n_{0}+1}^{n} \frac1k \to \epsilon \;\; (\text{as } n\to \infty).
\end{align*}
Therefore we have, 
\begin{align}\label{eq:yijnconv} 
\frac{1}{\log(n)} \sum_{k=j}^{n} \frac{Y_{ij}(k,\omega)}{k^{2}} \to Y_{ij}(\infty,\omega).
\end{align}
By  Proposition \ref{prop:degdeg}, \eqref{eq:yijn} holds almost surely implying  \eqref{eq:yijnconv} holds almost surely and hence Lemma \ref{lem:sumright}(1) holds.

\noindent (2) For $\delta<0$, we get $\frac12<\gamma=\frac{C}{2C+\delta}<1$ which means $0<2-2\gamma<1$. Note  that for any $0<\alpha<1$, we have  $$\displaystyle{\sum_{k=1}^{n} \frac1{k^{\alpha}} \sim \frac{1}{1-\alpha}n^{1-\alpha}}  \quad \quad (n\to \infty).$$ Now  we can prove Lemma \ref{lem:sumright}(2) in the same manner as we proved (1).
\end{proof}

\begin{Lemma}\label{lem:sumleft}
For any $i<j$, we have
\begin{align*}
(1)& \lim_{n\to \infty} \frac{1}{\log(n)} \sum_{k=j}^{n} \frac{Y_{ij}(k)}{k^{2}} \left(1-\frac{C-2}{2}\frac{X_{i}(k)+X_{j}(k)}{(2C+\delta)k}\right) = Y_{ij} (\infty) \quad \text{a.s. if } \delta =0,\\
(2) &\lim_{n\to \infty} \frac{1}{n^{2\gamma-1}/(2\gamma-1)} \sum_{k=j}^{n} \frac{Y_{ij}(k)}{k^{2}}  \left(1-\frac{C-2}{2}\frac{X_{i}(k)+X_{j}(k)}{(2C+\delta)k}\right) = Y_{ij} (\infty) \quad \text{a.s. if } \delta <0
\end{align*}
where $Y_{ij}(\infty)$ is as defined in Proposition \ref{prop:degdeg}.

\end{Lemma}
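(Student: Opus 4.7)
The plan is to show that the extra factor $-\tfrac{C-2}{2}\tfrac{X_i(k)+X_j(k)}{(2C+\delta)k}$ contributes negligibly to the sum, so that Lemma \ref{lem:sumleft} reduces immediately to Lemma \ref{lem:sumright}. First I would expand the sum in the statement as
\[ S_1(n) - \frac{C-2}{2(2C+\delta)}\, S_2(n), \quad \text{where} \quad S_1(n) = \sum_{k=j}^{n} \frac{Y_{ij}(k)}{k^{2}}, \ S_2(n) = \sum_{k=j}^{n} \frac{Y_{ij}(k)(X_i(k)+X_j(k))}{k^{3}}. \]
Lemma \ref{lem:sumright} already gives the asymptotic behavior of $S_1(n)/a_n$ with $a_n = \log n$ in case (1) and $a_n = n^{2\gamma-1}/(2\gamma-1)$ in case (2). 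It therefore suffices to prove that $S_2(n)/a_n \to 0$ almost surely in each case.

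When $C = 2$ the coefficient $(C-2)/(2(2C+\delta))$ vanishes and there is nothing further to do. For $C \ge 3$, I would use Proposition \ref{prop:degree} together with Proposition \ref{prop:degdeg}: on an event of full measure there exist (random) finite constants $M$ and $k_0$ such that $X_i(k) + X_j(k) \le M k^\gamma$ and $Y_{ij}(k) \le M k^{2\gamma}$ for all $k \ge k_0$. Hence the summand in $S_2(n)$ is bounded almost surely by a constant multiple of $k^{3\gamma - 3}$, and elementary comparison with $\sum k^{3\gamma-3}$ yields three regimes: $S_2(n) = O(1)$ a.s. if $\gamma < 2/3$, $S_2(n) = O(\log n)$ a.s. if $\gamma = 2/3$, and $S_2(n) = O(n^{3\gamma - 2})$ a.s. if $\gamma > 2/3$.

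In case (1) we have $\gamma = 1/2$, falling in the first regime; since $a_n = \log n \to \infty$, we immediately get $S_2(n)/a_n \to 0$. In case (2) we have $\gamma \in (1/2, 1)$, so $a_n$ diverges polynomially. In each sub-regime $S_2(n)$ is of smaller order than $a_n$: for $\gamma > 2/3$ the ratio is a constant times $n^{\gamma - 1}$, which tends to zero because $\gamma < 1$ (guaranteed by $\delta > -C$); for $\gamma = 2/3$ the ratio is $O(\log n / n^{1/3})\to 0$; and for $\gamma < 2/3$ the numerator is bounded while the denominator diverges. Combining these with Lemma \ref{lem:sumright} yields the stated limit $Y_{ij}(\infty)$ in both parts.

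The main obstacle is purely bookkeeping across the sub-cases of $\gamma$. The algebraic heart of the argument is the observation that the correction factor has relative order $k^{\gamma-1}$ with $\gamma < 1$, which is exactly what makes the strict inequality $3\gamma - 2 < 2\gamma - 1$ hold and absorbs the extra term into the lower order of the normalizer.
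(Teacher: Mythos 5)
Your argument is correct, but it is organized differently from the paper's. You split the sum into the main term $S_1(n)$ (handled by Lemma \ref{lem:sumright}) plus an error sum $S_2(n)$, and then show $S_2(n)/a_n \to 0$ a.s.\ by bounding the summand by a random constant times $k^{3\gamma-3}$ and running a three-way case analysis on $\gamma$ versus $2/3$; all of those estimates check out (the key inequalities $3\gamma-2 < 2\gamma-1$ and $\gamma<1$ are exactly what is needed, and in case (1) the bounded error sum is killed by $\log n \to \infty$). The paper instead absorbs the correction into a modified summand $Y_{ij}^{*}(n):= Y_{ij}(n)\bigl(1-\tfrac{C-2}{2}\tfrac{X_i(n)+X_j(n)}{(2C+\delta)n}\bigr)$, observes that $Y_{ij}^{*}(n)/n^{2\gamma} \to Y_{ij}(\infty)$ a.s.\ because the correction factor tends to $1$ (the relative correction is $O(n^{\gamma-1})$ with $\gamma<1$), and then reruns the Ces\`aro-type argument of Lemma \ref{lem:sumright} verbatim with $Y_{ij}$ replaced by $Y_{ij}^{*}$. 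The underlying fact driving both proofs is identical, but the paper's formulation avoids your bookkeeping over the sub-regimes of $\gamma$, since it never needs the precise growth rate of the error sum, only the pointwise convergence of the corrected and normalized summand; your route, in exchange, makes the negligibility of the correction completely explicit and quantitative.
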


\begin{proof}
Define \[Y_{ij}^{*}(n):=  {Y_{ij}(n)}\left(1-\frac{C-2}{2}\frac{X_{i}(n)+X_{j}(n)}{(2C+\delta)n}\right). \]
Note that for $0<\gamma<1$,
\begin{align*}
\frac{Y_{ij}^{*}(n)}{n^{2\gamma}} &=  \frac{Y_{ij}(n)}{n^{2\gamma}}\left(1-\frac{C-2}{2}\frac{X_{i}(n)+X_{j}(n)}{(2C+\delta)n}\right). \\
                                                    & =  \frac{Y_{ij}(n)}{n^{2\gamma}} \left(1-\frac{C-2}{2}\frac{X_{i}(n)+X_{j}(n)}{n^{\gamma}} \frac{1}{(2C+\delta)n^{1-\gamma}}\right)\\
                                                    & \to Y_{ij}(\infty) \left(1-\frac{C-2}{2}(D_{i}(\infty)+D_{j}(\infty))\times 0\right) = Y_{ij}(\infty) \quad \text{(as $n\to \infty$)}
\end{align*}
which holds a.s. using Propositions \ref{prop:degree} and \ref{prop:degdeg}. Now we can proceed to prove the statements using the same arguments as in Lemma \ref{lem:sumright} by replacing $Y_{ij}$ with $Y_{ij}^{*}$.
\end{proof}

With the aid of all the results above we are in a position to prove Theorem  \ref{thm:growingfriends}.
\begin{proof}[Proof of Theorem \ref{thm:growingfriends}]
Using \eqref{def:nijrel} recursively we have
\begin{align}\label{def:nijrecurrence}
N_{ij}(n+1) = N_{{ij}} (j) + \sum_{k=j+1}^{n+1}\bone_{B_{ij}(n+1)}.
\end{align}
 For any $1\le i<j\le n$ with $C\ge2$,
 
 \begin{align*}
 \E[N_{ij}(n+1)| \PA_{n}^{\delta,C}]  & = N_{ij} (n) + \E[N_{ij}(n+1)-N_{ij}(n)| \PA_{n}^{\delta,C}] \\
                                                      & = N_{ij}(n) + \left[1- \left(1-\frac{D_{i}(n)   + \delta}{(2C+\delta)n}\right)^{C}\right.  \\
                                                      &  \quad \quad-  \left.\left(1-\frac{D_{j}(n)+\delta}{(2C+\delta)n}\right)^{C}+ \left(1-\frac{D_{i}(n)+D_{j}(n)+2\delta}{(2C+\delta)n}\right)^{C}\right].
\end{align*}
We can check that,
\begin{align}
 \E[N_{ij}(n+1)| \PA_{n}^{\delta,C}]  & \le N_{ij}(n) + C(C-1)\frac{Y_{ij}(n)}{(2C+\delta)^{2}n^{2}}, \label{eq:right}
\end{align}
 holding with equality for $C=2$ and 
\begin{align}
  & \E[N_{ij}(n+1)| \PA_{n}^{\delta,C}]  \nonumber\\
  & \ge N_{ij}(n) + C(C-1)\frac{Y_{ij}(n)}{(2C+\delta)^{2}n^{2}}  \left[1- \frac{C-2}{2}\left(\frac{X_{i}(n)+X_{j}(n)}{(2C+\delta)n}\right)\right].\label{eq:left}
\end{align}

\noindent \emph{Proof of part (1).} First we prove the case when $\delta>0$. Clearly as $n\to \infty$, $N_{ij}(n+1) \uparrow N_{ij}(\infty)$ where
\[N_{ij}(\infty) := N_{{ij}} (j) + \sum_{k=j+1}^{\infty}\bone_{B_{ij}(n+1)}.\]
We want to show that $N_{ij}(\infty)<\infty$ a.s.. Taking expectations in \eqref{eq:right} and using \eqref{eq:yijnp1} we get

\begin{align*}
\E[N_{ij}(n+1)] & \le \E[N_{ij}(n)] + \E[Y_{ij}(n)] \frac{C(C-1)}{(2C+\delta)^{2}n^{2}}\\ 
                        & = \E[N_{ij}(n)] + \frac{C(C-1)C_{ij}}{(2C+\delta)^{2}}\frac{\Gamma(n+\gamma_{1})\Gamma(n+\gamma_{2})}{(\Gamma(n+1))^{2}}.
\end{align*}
Applying this argument recursively we get
\begin{align*}
\E[N_{ij}(n+1)]     & \le  \E[N_{ij}(j)] + \frac{C(C-1)C_{ij}}{(2C+\delta)^{2}}\sum_{k=j}^{n}\frac{\Gamma(k+\gamma_{1})\Gamma(k+\gamma_{2})}{(\Gamma(k+1))^{2}}\\
                            & < \E[N_{ij}(j)] + \tilde{C} \sum_{k=j}^{\infty}\frac{\Gamma(k+\gamma_{1})\Gamma(k+\gamma_{2})}{(\Gamma(k+1))^{2}}\\
                           & \le \E[N_{ij}(j)] +\tilde{C} \sum_{k=j}^{\infty} \frac{1}{k^{2-(\gamma_{1}+\gamma_{2})}} \left(1+O\left(\frac1k\right)\right).\end{align*}
where $\tilde{C}= \frac{C(C-1)C_{ij}}{(2C+\delta)^{2}} $. Therefore for any $n$ we have
\begin{align}\label{ineq:pb}
 \sum_{k=j+1}^{n+1}\P({B_{ij}(k)}) = \E[N_{ij}(n+1)]   -  \E[N_{ij}(j)] < \tilde{C} \sum_{k=j}^{\infty} \frac{1}{k^{2-2\gamma}} \left(1+O\left(\frac1k\right)\right) <\infty
\end{align}
since $2-(\gamma_{1}+\gamma_{2})=2-2\gamma = 1+\frac{\delta}{2C+\delta} >1$ for $\delta>0$. Since the right hand side in \eqref{ineq:pb} does not depend on $n$, $ \sum_{k=j+1}^{\infty}\P({B_{ij}(k)}) <\infty$. Using  Borel-Cantelli Lemma \citep[Theorem 2.3.1]{durrett:2019} this implies $$\P(B_{ij} \text{ happens infinitely often})=0$$ and hence $ \sum_{k=j+1}^{\infty}\bone_{B_{ij}(n+1)}<\infty$ a.s. and since $N_{{ij}} (j)\le \max(j-2,C)$ we have $$N_{ij}(\infty)=N_{ij}(j) +\sum_{k=j+1}^{\infty}\bone_{B_{ij}(k)}<\infty \quad \text{a.s.}$$

\noindent \emph{Proof of part (2).} Here we address the case where $\delta=0$. Define $$Q_{n}= \P(B_{ij}(n)|\PA_{n}^{\delta,C}) = \E\left[\bone_{B_{ij}(n)}\big|\PA_{n}^{\delta,C}\right]=  \E[N_{ij}(n)-N_{ij}(n)| \PA_{n}^{\delta,C}] .$$ Using the conditional Borel-Cantelli Lemma \cite[Theorem 4.4.5]{durrett:2019}  we have
\begin{align}\label{con:BCQk}
\frac{\sum_{k=j+1}^{n} \bone_{B_{ij}(k)}}{\sum_{k=j+1}^{n} Q_{k}} \to 1 \quad \text{a.s. on $\Big\{\sum_{k=j+1}^{n} Q_{k}=\infty\Big\}$}
\end{align}
Note that, using \eqref{eq:right} and \eqref{eq:left}, we have for $i<j<n$,
\begin{align*}
 L_{n} &:= C(C-1)\frac{Y_{ij}(n)}{(2C+\delta)^{2}n^{2}}  \left[1- \frac{C-2}{2}\left(\frac{X_{i}(n)+X_{j}(n)}{(2C+\delta)n}\right)\right] \le Q_{n}, \; \text{and, }\\
 R_{n}& := C(C-1)\frac{Y_{ij}(n)}{(2C+\delta)^{2}n^{2}} \ge Q_{n}.
\end{align*}
Using the above recursively we obtain
\begin{align*}
\sum_{k=j+1}^{n} L_{k} \le \sum_{k=j+1}^{n} Q_{k} \le \sum_{k=j+1}^{n} R_{k}.
\end{align*}
Now, from Lemmas \ref{lem:sumright}(1) and \ref{lem:sumleft}(1) we have
\[ \lim_{n\to\infty} \frac{\sum_{k=j+1}^{n} L_{k} }{\log(n)} = \lim_{n\to\infty} \frac{\sum_{k=j+1}^{n} R_{k} }{\log(n)} = \frac{C(C-1)}{(2C+\delta)^{2}} Y_{ij}(\infty) \quad \text{a.s.}\]
Therefore we get

\begin{align}\label{con:Qk}
 \lim_{n\to\infty} \frac{\sum_{k=j+1}^{n} Q_{k} }{\log(n)} = \frac{C(C-1)}{(2C+\delta)^{2}} Y_{ij}(\infty) \quad \text{and } \quad  \Big\{\sum_{k=j+1}^{n} Q_{k} =\infty\Big\} \quad \text{a.s.}
 \end{align}
 Hence from \eqref{con:BCQk} and \eqref{con:Qk} we have
 \begin{align*}
 \lim_{n\to\infty} \frac{N_{ij}(n)}{\log(n)} & = \lim_{n\to\infty} \frac{N_{ij}(j)}{\log(n)} +  \lim_{n\to\infty} \frac{\sum_{k=j+1}^{n} \bone_{B_{ij}(k)}}{\log(n)} \\
                               &= 0 +  \lim_{n\to\infty} \frac{\sum_{k=j+1}^{n} \bone_{B_{ij}(k)}}{\sum_{k=j+1}^{n} Q_{k}}   \frac{\sum_{k=j+1}^{n} Q_{k} }{\log(n)} \\ & =\frac{C(C-1)}{(2C+\delta)^{2}} Y_{ij}(\infty) = \frac{C(C-1)}{(2C+\delta)^{2}}  D_{i}(\infty) D_{j}(\infty) \quad \text{a.s.}
 \end{align*}

 \noindent \emph{Proof of part (3).}  The case where $\delta<0$  can be shown using the same technique as for $\delta=0$ by using Lemmas \ref{lem:sumright}(2) and \ref{lem:sumleft}(2)  in place of Lemmas \ref{lem:sumright}(1) and \ref{lem:sumleft}(1).
\end{proof}

\section{Conclusion}\label{sec:conclusion}
In this paper we establish the rate of growth of the number of common friends for two fixed nodes in a linear preferential attachment model. The growth rate is shown to be static, logarithmic or power-law type depending on the choice of the parameter- $\delta >0, \delta =0$ or $\delta <0$ respectively. We use this result to  prove consistency of an estimator of the number of common friends that is less expensive to compute. Such results will be applicable in both link prediction problems for large dynamic networks as well as detection methods for a preferential attachment model.

This is the first step in showing a more general result regarding the growth behavior for common friends of any randomly chosen pair of nodes and obtaining uniform convergence bounds for estimators of common friends. Further properties of such models and estimation issues are under current investigation.

\section{Acknowledgement}
The authors are very grateful to the referee for insightful comments and also for providing us with precise ideas to fill gaps in parts of the proof of Theorem 2.1.


\bibliographystyle{imsart-nameyear}
\bibliography{bibfilenew}

\end{document}